\documentclass[11pt]{amsart}
\usepackage{amsmath}
\usepackage{bbding}
\usepackage{tikz}
\usepackage{amssymb}
\usepackage{mathrsfs}

\usepackage[plainpages=false,hypertexnames=false,colorlinks=true,linkcolor=blue,citecolor=blue,urlcolor=blue]{hyperref}

\theoremstyle{plain}
\newtheorem{theorem}{Theorem}[section]
\newtheorem{lemma}[theorem]{Lemma}
\newtheorem{prop}[theorem]{Proposition}
\newtheorem{corollary}[theorem]{Corollary}

\theoremstyle{definition}

\numberwithin{equation}{section}

\def\be{\begin{equation}}
\def\ee{\end{equation}}

\begin{document}

\title[Mixed Local-Nonlocal Operator]
{Asymptotic Expansions of the Fundamental Solution for a Mixed Local--Nonlocal Operator}

\author[C.~Li]{Congming Li}
\address{School of Mathematical Sciences\\
Shanghai Jiao Tong University\\
Shanghai, 200240, China}  \email{congming.li@sjtu.edu.cn}
\author[L.~Wang]{Ling Wang}
\address{Department of Decision Sciences and BIDSA\\ Bocconi University\\ Milano, Italy}
\email{ling.wang@unibocconi.it}
\author[J.~Xie]{Jiongduo Xie\textsuperscript{*}}
\thanks{\textsuperscript{*}Corresponding author.}
\address{School of Mathematical Sciences\\
Shanghai Jiao Tong University\\
Shanghai, 200240, China}
\email{jiongduoxie@outlook.com}
\subjclass[2020]{Primary 35A08; Secondary 35C20, 35R11, 35B50, 35A21}

\keywords{Mixed local--nonlocal operators, fundamental solutions, asymptotic expansions, B\^ocher-type theorem, maximum principles}

\begin{abstract}
In this paper, we study the fundamental solution of the mixed local--nonlocal operator
$
-\Delta+(-\Delta)^s,$ $ 0<s<1.
$
We derive precise asymptotic expansions of the fundamental solution and its gradient, up to the first nontrivial correction term, both near the origin and at infinity. The expansions describe the transition between the local and nonlocal diffusion scales and include the critical regimes in which logarithmic terms occur. As applications of these asymptotics, we establish a distributional B\^ocher-type theorem for nonnegative supersolutions with an isolated singularity. We further obtain quantitative positive and antisymmetric maximum principles on punctured balls.
\end{abstract}

\maketitle

\section{Introduction}\label{sec-Intro}

Mixed local--nonlocal operators arise when diffusion mechanisms of different orders act simultaneously. In this paper, we consider the operator
\[
\mathscr{L}_s:=-\Delta+(-\Delta)^s,
\qquad 0<s<1.
\]
At the probabilistic level, \(-\mathscr{L}_s\) is the generator of the sum of a Brownian motion and an independent symmetric \(2s\)-stable process. This connects the operator with the potential theory of stable processes and mixed Brownian--stable processes. We refer to \cite{BG1960,SV2007,CKSV2010,CKS2011} for representative results in this direction. Local diffusion coupled with jump processes also appears naturally in stochastic control and jump-diffusion problems \cite{BJK2010,BK2023,DRZ2021}, as well as in population dynamics and ecological dispersal models \cite{DV2021,DPV2023}.

From the PDE point of view, equations containing both differential and integral terms belong to the broader theory of elliptic integro-differential equations developed in \cite{GL1984,BI2008,FR2024}. 
A substantial literature has recently developed on mixed local--nonlocal
elliptic operators, including regularity and maximum principles
\cite{BDVV2022,DM2024,GK2022,GL2023,SVWZ2022,SVWZ2025}, 
spectral and variational problems
\cite{DPV2022,BDVV2023,Bdd2022,HS2026}, and existence, symmetry, and
critical phenomena for nonlinear elliptic equations
\cite{BVDV2021,BMV2024,BDQ2025,GLX2026,DSVZ2025,SX2025}.

Throughout the paper, the fractional Laplacian is normalized by
\begin{equation*}
(-\Delta)^s u(x)
=
C_{n,s}\,\mathrm{PV}
\int_{\mathbb R^n}
\frac{u(x)-u(y)}{|x-y|^{n+2s}}\,\mathrm{d}y
=
C_{n,s}\lim_{\varepsilon\searrow0}
\int_{\mathbb R^n\setminus B_\varepsilon(x)}
\frac{u(x)-u(y)}{|x-y|^{n+2s}}\,\mathrm{d}y,
\end{equation*}
initially for sufficiently regular functions, where
\[
C_{n,s}
=
\frac{2^{2s}\Gamma(s+n/2)}
{\pi^{n/2}|\Gamma(-s)|}.
\]
We refer to \cite{S2007,CLM2020,FR2024} for standard background on the fractional Laplacian and elliptic integro-differential operators.

A fundamental feature of \(\mathscr{L}_s\) is its lack of homogeneity under dilations. Its Fourier symbol is
$|\xi|^2+|\xi|^{2s}.$
For large frequencies, the second-order term is dominant, while for small frequencies the fractional term is dominant. More precisely,
\[
|\xi|^2+|\xi|^{2s}\sim|\xi|^2
\qquad\text{as }|\xi|\to\infty,
\]
whereas
\[
|\xi|^2+|\xi|^{2s}\sim|\xi|^{2s}
\qquad\text{as }|\xi|\to0.
\]
This frequency transition is reflected directly in the fundamental solution. Its singularity near the pole is governed by leading order by the classical Laplacian, while its behavior at infinity is governed by leading order by the fractional Laplacian. The interaction between the two components first appears in the next term of the corresponding asymptotic expansions.

Fundamental solutions and Green kernels play a central role in potential estimates, isolated-singularity problems, and Liouville-type questions. For homogeneous local and fractional operators, the leading kernels are explicit. Green functions and fundamental solutions for various nonlocal and nonlinear nonlocal operators have been studied in \cite{AJS2018,FQ2011,DQ2026}. For sums of Brownian and stable generators, sharp Green function and heat kernel estimates are available in \cite{CKSV2010,CKS2011}. For \(n\ge 3\), the exact leading-order asymptotics of the fundamental solution follow from the general results of Rao, Song, and
Vondra\v{c}ek \cite{RaoSongVondracek2006} for subordinate Brownian
motions. Indeed, applying their Theorems to the Laplace
exponent \(\phi(\lambda)=\lambda+\lambda^s\) yields the Newtonian
leading term near the origin and the fractional Riesz leading term at
infinity. More recently, sharp two-sided estimates for the fundamental solution associated with the mixed operator were also established and 
used in the analysis of mixed Lane--Emden equations in \cite{GLX2026}.

The first purpose of this paper is to obtain information beyond such leading-order estimates. We determine the first nontrivial correction to the fundamental solution of \(\mathscr{L}_s\) both near the origin and at infinity. We obtain the corresponding expansion for its gradient at the same level of precision. The structure of the correction depends on both the dimension and the fractional exponent. In particular, critical homogeneous degrees lead to logarithmic terms. Near the origin, this occurs when \(n=3\) and \(s=\frac12\). At infinity, \(s=\frac12\) is again exceptional. The second term in the low-frequency expansion is the constant multiplier \(-1\). After multiplication by the cutoff, its inverse Fourier transform is a Schwartz function, so the following \(|\xi|\)-term gives the first nonzero correction in the far field.

We work mainly with distributional solutions. Set
\[
\mathcal L^{2s}(\mathbb R^n)
=
\left\{
u\in L^1_{\mathrm{loc}}(\mathbb R^n)
\,\middle|\,
\int_{\mathbb R^n}
\frac{|u(x)|}{1+|x|^{n+2s}}\,\mathrm{d}x<\infty
\right\}.
\]
If \(\Omega\subset\mathbb R^n\) is open and
\(u\in\mathcal L^{2s}(\mathbb R^n)\), we define
\(\mathscr{L}_su\) in \(\Omega\) by
\[
\langle\mathscr{L}_su,\phi\rangle
=
\int_{\mathbb R^n}
u(x)\mathscr{L}_s\phi(x)\,\mathrm{d}x,
\qquad
\phi\in C_c^\infty(\Omega).
\]
Since
$|(-\Delta)^s\phi(x)|
\le
C_\phi(1+|x|)^{-n-2s}$,
the pairing is well defined.

We now state the first main result. The Fourier transform is normalized by
\begin{equation}
\widehat f(\xi)
=
\int_{\mathbb R^n}e^{-ix\cdot\xi}f(x)\,\mathrm{d}x,
\qquad
f(x)
=
\frac{1}{(2\pi)^n}
\int_{\mathbb R^n}
e^{ix\cdot\xi}\widehat f(\xi)\,\mathrm{d}\xi.
\label{eq:fourier-normalization}
\end{equation}
We write
\[
\omega_{n-1}
=
|\mathbb S^{n-1}|
=
\frac{2\pi^{n/2}}{\Gamma(n/2)}
\quad
\text{and}
\quad
\kappa_{n,\alpha}
=
\frac{\Gamma\!\left(\frac{n-\alpha}{2}\right)}
{2^\alpha\pi^{n/2}\Gamma\!\left(\frac{\alpha}{2}\right)}.
\]
For
\(\alpha\notin\{n,n+2,n+4,\ldots\}\), the constant
\(\kappa_{n,\alpha}\) is understood through the meromorphic continuation
of the Riesz-kernel family described in
Appendix~\ref{app:homogeneous-distributions}.

\begin{theorem}\label{thm:fundamental-solution}
Let \(n\geq2\) and \(0<s<1\). Define
\[
    \Gamma_{n,s}
    =
    \mathcal F^{-1}
    \left(\frac{1}{|\xi|^2+|\xi|^{2s}}\right)
    \in\mathcal S'(\mathbb R^n).
\]
Then
\begin{equation}
    \mathscr{L}_s\Gamma_{n,s}
    =\delta_0
    \qquad\text{in }\mathcal S'(\mathbb R^n),
    \label{eq:fundamental-solution-equation}
\end{equation}
and \(\Gamma_{n,s}\) is represented by a smooth real-valued radial
function in \(\mathbb R^n\setminus\{0\}\). Moreover, the following assertions hold.

\begin{enumerate}
\item \textbf{Asymptotics at the origin.}

\begin{description}
\item[\(n\geq3\) and \(4-2s<n\)]
As \(|x|\to0\),
\begin{equation}
\begin{aligned}
    \Gamma_{n,s}(x)
    &=
    \frac{|x|^{2-n}}{(n-2)\omega_{n-1}}
    -\kappa_{n,4-2s}|x|^{4-2s-n}
    +o\!\left(|x|^{4-2s-n}\right),\\
    \nabla\Gamma_{n,s}(x)
    &=
    -\frac{x}{\omega_{n-1}|x|^n}
    -(4-2s-n)\kappa_{n,4-2s}
      x|x|^{2-2s-n}
    +o\!\left(|x|^{3-2s-n}\right).
\end{aligned}
\label{eq:origin-generic-expansion}
\end{equation}

\item[\(n=3\) and \(s=\frac12\)]
As \(|x|\to0\),
\begin{equation}
\begin{aligned}
    \Gamma_{3,1/2}(x)
    &=
    \frac{1}{4\pi|x|}
    -\frac{1}{2\pi^2}\log\frac1{|x|}
    +O(1),\\
    \nabla\Gamma_{3,1/2}(x)
    &=
    -\frac{x}{4\pi|x|^3}
    +\frac{1}{2\pi^2}\frac{x}{|x|^2}
    +O(1).
\end{aligned}
\label{eq:origin-critical-expansion}
\end{equation}

\item[\(n=3\) and \(0<s<\frac12\)]
There exists \(C_{3,s}\in\mathbb R\) such that, as \(|x|\to0\),
\begin{equation}
\begin{aligned}
    \Gamma_{3,s}(x)
    &=
    \frac{1}{4\pi|x|}
    +C_{3,s}
    -\kappa_{3,4-2s}|x|^{1-2s}
    +o\!\left(|x|^{1-2s}\right),\\
    \nabla\Gamma_{3,s}(x)
    &=
    -\frac{x}{4\pi|x|^3}
    -(1-2s)\kappa_{3,4-2s}
      x|x|^{-1-2s}
    +o\!\left(|x|^{-2s}\right).
\end{aligned}
\label{eq:origin-three-subcritical-expansion}
\end{equation}

\item[\(n=2\)]
There exists \(C_{2,s}\in\mathbb R\) such that, as \(|x|\to0\),
\begin{equation}
\begin{aligned}
    \Gamma_{2,s}(x)
    &=
    \frac{1}{2\pi}\log\frac1{|x|}
    +C_{2,s}
    -\kappa_{2,4-2s}|x|^{2-2s}
    +o\!\left(|x|^{2-2s}\right),\\
    \nabla\Gamma_{2,s}(x)
    &=
    -\frac{x}{2\pi|x|^2}
    -(2-2s)\kappa_{2,4-2s}
      x|x|^{-2s}
    +o\!\left(|x|^{1-2s}\right).
\end{aligned}
\label{eq:origin-two-dimensional-expansion}
\end{equation}
\end{description}

\item \textbf{Asymptotics at infinity.}

\begin{description}
\item[\(s\neq\frac12\)]
As \(|x|\to\infty\),
\begin{equation}
\begin{aligned}
    \Gamma_{n,s}(x)
    &=
    \kappa_{n,2s}|x|^{2s-n}
    -\kappa_{n,4s-2}|x|^{4s-2-n}
    +o\!\left(|x|^{4s-2-n}\right),\\
    \nabla\Gamma_{n,s}(x)
    &=
    -(n-2s)\kappa_{n,2s}
      \frac{x}{|x|^{n+2-2s}}
    \\&\qquad
    +(n+2-4s)\kappa_{n,4s-2}
      \frac{x}{|x|^{n+4-4s}}
    +o\!\left(|x|^{4s-3-n}\right).
\end{aligned}
\label{eq:infinity-generic-expansion}
\end{equation}

\item[\(s=\frac12\)]
As \(|x|\to\infty\),
\begin{equation}
\begin{aligned}
    \Gamma_{n,1/2}(x)
    &=
    \kappa_{n,1}|x|^{1-n}
    +\kappa_{n,-1}|x|^{-n-1}
    +o\!\left(|x|^{-n-1}\right),\\
    \nabla\Gamma_{n,1/2}(x)
    &=
    -(n-1)\kappa_{n,1}
      \frac{x}{|x|^{n+1}}
    -(n+1)\kappa_{n,-1}
      \frac{x}{|x|^{n+3}}
    +o\!\left(|x|^{-n-2}\right).
\end{aligned}
\label{eq:infinity-critical-expansion}
\end{equation}
\end{description}
\end{enumerate}
All vector-valued little-\(o\) terms are understood with respect to the
Euclidean norm.
\end{theorem}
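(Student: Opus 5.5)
The plan is to work entirely on the Fourier side with a frequency cutoff, splitting the symbol into a high-frequency and a low-frequency part and expanding each in homogeneous pieces whose inverse transforms are the Riesz kernels $\kappa_{n,\alpha}|x|^{\alpha-n}$ from Appendix~\ref{app:homogeneous-distributions}. The distributional identity \eqref{eq:fundamental-solution-equation} is immediate. Indeed, $m(\xi)=(|\xi|^2+|\xi|^{2s})^{-1}$ is locally integrable for $n\ge2$, since near $0$ it is $\sim|\xi|^{-2s}$ with $2s<n$, and it decays at infinity, so it defines a tempered distribution. Multiplying by the symbol $|\xi|^2+|\xi|^{2s}$ gives $1$, because that symbol is a continuous function of polynomial growth. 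Radiality and real-valuedness follow from $m$ being radial and real. Smoothness away from $0$ follows since $\partial_\xi^\beta$ applied to $m$ multiplied by a high-frequency cutoff is integrable for $|\beta|$ large; hence $x^\beta\Gamma_{n,s}$ is a continuous function modulo the low-frequency piece, and the low-frequency piece is compactly supported in $\xi$ and therefore real-analytic.

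Fix $\chi\in C_c^\infty$ radial with $\chi=1$ near $0$, and write $m=\chi m+(1-\chi)m$.

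\textbf{Near the origin.} On $\{|\xi|\ge1\}$ I expand
\[
m=|\xi|^{-2}-|\xi|^{2s-4}+|\xi|^{4s-6}-\dots+R_N,
\]
with $R_N=O(|\xi|^{-2-N(2-2s)})$. The behaviour near $x=0$ is governed by $(1-\chi)m$, since $\chi m$ gives a smooth function. For each term $(1-\chi)|\xi|^{-\alpha}$, I compare it with the homogeneous distribution $|\xi|^{-\alpha}$. The difference $\chi|\xi|^{-\alpha}$ has an inverse transform which is smooth when $\alpha<n$, so locally it only contributes a constant plus $O(|x|^2)$. When $\alpha\ge n$, I instead use the meromorphically continued, regularized homogeneous distribution; this produces the constants $C_{3,s},C_{2,s}$ and, at the pole $\alpha=n$, a logarithm.
\begin{itemize}
\item The term $\alpha=2$ gives the Newtonian kernel, or $\frac1{2\pi}\log\frac1{|x|}$ when $n=2$.
\item The term $\alpha=4-2s$ gives $-\kappa_{n,4-2s}|x|^{4-2s-n}$. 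When $n=3$, $s=\tfrac12$, this sits exactly at the pole $\alpha=n$, and the Laurent expansion of the Riesz family yields $-\frac{1}{2\pi^2}\log\frac1{|x|}$; the coefficient is $\omega_2/(2\pi)^3=1/(2\pi^2)$.
\item The remainder must be shown to be $o(|x|^{4-2s-n})$, or $O(1)$ in the critical case. I choose $N$ so that $R_N(1-\chi)\in L^1$, and treat the finitely many intermediate homogeneous terms of degree $-\alpha<-(4-2s)$ explicitly. Each is either of higher order in $|x|$ or smooth, though a constant may absorb into $C_{n,s}$, which is why constants appear exactly when $n\le3$.
\end{itemize}

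\textbf{At infinity.} Here $(1-\chi)m$ is harmless: its derivatives of all orders are integrable, so its inverse transform is $O(|x|^{-M})$. On the support of $\chi$ I expand
\[
m=|\xi|^{-2s}\bigl(1+|\xi|^{2-2s}\bigr)^{-1}=|\xi|^{-2s}-|\xi|^{2-4s}+|\xi|^{4-6s}-\dots,
\]
and multiply by $\chi$. The terms $\chi|\xi|^{-2s}$ and $-\chi|\xi|^{2-4s}$ give $\kappa_{n,2s}|x|^{2s-n}$ and $-\kappa_{n,4s-2}|x|^{4s-2-n}$ plus Schwartz errors. If $s=\tfrac12$, the second term is $-\chi\cdot1$, which is Schwartz, and the next term $\chi|\xi|$ gives $\kappa_{n,-1}|x|^{-n-1}$ with the sign as stated. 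Remaining terms are $\chi$ times $O(|\xi|^{4-6s})$ (respectively $O(|\xi|^2)$ after the constant), which are smoother at $0$ than the kept term. Their inverse transforms are $o$ of the kept order by the standard fact that a compactly supported symbol, smooth away from $0$ and satisfying $|\partial^\beta a|\le C|\xi|^{\gamma-|\beta|}$, has inverse transform $O(|x|^{-n-\gamma})$. I prove this fact via a Littlewood--Paley dyadic decomposition; the even-integer cases give Schwartz contributions.

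\textbf{Gradients.} I run the same argument for the symbols $i\xi\,m(\xi)$; the homogeneous terms differentiate to exactly the stated vector fields. Alternatively, I differentiate the kernel expansions directly, since each is of symbol type.

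\textbf{Main obstacle.} The hardest step is the remainder estimates in the near-origin expansion. The remainder symbols there are not exactly homogeneous, so I need the symbol-class decay lemma in its high-frequency form: for $(1-\chi)a$ with $|\partial^\beta a|\le C|\xi|^{-\gamma-|\beta|}$, the inverse transform is $O(|x|^{\gamma-n})$ when $\gamma<n$. The delicate part is then organizing the case distinctions $4-2s\gtrless n$ and tracking the constants.
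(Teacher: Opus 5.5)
Your proposal is correct and follows essentially the same route as the paper: the split $m=\chi m+(1-\chi)m$, finite geometric expansions of $m$ at high and low frequency, inversion of the homogeneous pieces through the meromorphically continued Riesz family (with the logarithms coming from the pole at $\alpha=n$), and dyadic symbol estimates for the remainders. The only real variation is near the origin, where you expand to high order instead of using the paper's radial H\"older refinement of the dyadic lemma for the order $-(6-4s)$ remainder; this works, but you must take $N$ large enough that $|\xi|^{2}R_N(1-\chi)\in L^1$ (mere integrability does not give the constant $+\,o(|x|^{\beta})$ behaviour needed for $n=2,3$), and you must allow intermediate terms to land on the poles $\alpha=n+2j$, $j\ge1$, where they contribute only harmless $|x|^{2j}\log|x|$ terms.
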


Theorem~\ref{thm:fundamental-solution} makes the two-scale structure of
\(\mathscr{L}_s\) explicit. Near the origin, the leading singularity is given by the fundamental
solution of \(-\Delta\), namely the Newtonian kernel when \(n\ge3\)
and the logarithmic kernel when \(n=2\). At infinity, the leading term is the fractional Riesz kernel
\(\kappa_{n,2s}|x|^{2s-n}\). The displayed scale-dependent correction terms quantify the first interaction
between the two parts of the operator.

The gradient expansions are also important for the applications below.
For every \(R>0\),
\[
\Gamma_{n,s}\in W^{1,p}(B_R),
\qquad
\text{for }1\le p<\frac{n}{n-1},
\]
while
\begin{equation}\label{non-int}
\nabla\Gamma_{n,s}\notin
L^{n/(n-1)}(B_R).
\end{equation}
Thus the gradient has precisely the endpoint singularity of the classical
Newtonian kernel. This borderline behavior will be used to eliminate
derivatives of the Dirac mass in the analysis of isolated singularities. 

The classical B\^ocher theorem determines the singular part of a
nonnegative harmonic function at an isolated point. More precisely, if \(u\) is
nonnegative and harmonic in a punctured ball, then
\[
    u=a\Phi+h,
\]
where \(a\geq 0\), \(\Phi\) is the fundamental solution of
\(-\Delta\), and \(h\) is harmonic in the full ball. The result first
appeared in B\^ocher's paper \cite{Bocher1903}. A later proof can
be found in \cite{ABR1992}. Extensions to more general second-order elliptic equations were obtained in several classical works \cite{HW1954,GilbargSerrin1955}. 
The distributional formulation of Brezis and Lions \cite{BL1981} is
particularly relevant to the present work. In their approach, extending
the equation across the puncture produces a defect distribution \(T\)
satisfying $ \operatorname{supp}T\subset\{0\}. $
Related classification results for positive solutions of semilinear
elliptic equations are given in \cite{Gidas1980,GS1981}. For the fractional Laplacian, maximum principles and
B\^ocher-type theorems on punctured balls were established in
\cite{LWX2018}. A distributional theory for nonnegative solutions of
fractional Laplace equations with an isolated singularity was
developed further in \cite{LLWX2020}. More generally, Klimsiak \cite{K2026} recently proved a B\^ocher-type theorem within a broad
framework of elliptic equations with drift-perturbed L\'evy operators,
which includes the mixed operator considered here. His theorem is formulated
for functions that are nonnegative on all of \(\mathbb R^n\), whereas
Theorem~\ref{thm-bocher} below requires nonnegativity only in \(B_1\). His argument is based on the probabilistic potential theory. In the present
paper, we instead give a direct analytic proof based on the precise
asymptotic expansions of \(\Gamma_{n,s}\) and
\(\nabla\Gamma_{n,s}\). 
Once the point-supported defect is shown to have
order at most one, it may still contain the first derivatives of
\(\delta_0\). The Newtonian leading term of \(\nabla\Gamma_{n,s}\) in
Theorem~\ref{thm:fundamental-solution} has exactly the endpoint
non-integrability \eqref{non-int} needed to exclude these dipole terms under the
nonnegativity assumption.

Our second main result is the following distributional B\^ocher theorem.

\begin{theorem}[B\^ocher theorem for $-\Delta+(-\Delta)^s$]
\label{thm-bocher}
Let $B_1\subset\mathbb R^n$, $n\ge2$, and $s\in(0,1)$. Assume
$u\in\mathcal L^{2s}(\mathbb R^n)$ satisfies $u\ge0$ a.e. in $B_1$ and
\[
    \mathscr{L}_su+cu\ge0
    \qquad
    \text{in }\mathcal D'(B_1\setminus\{0\}),
\]
where $c\in L^\infty(B_1)$. Then there exist $a\ge0$ and a nonnegative
Radon measure $\mu$ in $B_1$, with $\mu(\{0\})=0$, such that
\[
    \mathscr{L}_su+cu
    =
    \mu+a\delta_0
    \qquad
    \text{in }\mathcal D'(B_1).
\]
\end{theorem}

In particular, the entire point-supported singular part of a nonnegative
distributional supersolution consists of a nonnegative multiple of
\(\delta_0\). No derivative of the Dirac mass can occur. The theorem is
formulated under the natural global tail condition
\(u\in\mathcal L^{2s}(\mathbb R^n)\), which is needed in order to define
the nonlocal part distributionally.

Maximum principles for local, nonlocal, and mixed elliptic equations have
been studied in many different settings. Representative results can be
found in \cite{JK2006,CLL2017,BDVV2022,SVWZ2025}. The relation between
maximum principles on punctured domains and B\^ocher-type theorems for the
fractional Laplacian is developed in \cite{LWX2018,LLWX2020}. In the
present mixed setting, Theorem~\ref{thm-bocher} provides the mechanism
that allows the puncture to be recovered distributionally. After replacing
the zero-order coefficient by its positive part, the singular contribution
at the puncture has the favorable sign. This reduces the problem to a
quantitative minimum estimate for a global supersolution on the ball.

We obtain the following two principles.

\begin{theorem}[Positive maximum principle]
\label{thm:mixed-positive}
Let $n\ge2$, $s\in(0,1)$, $B_r(x_0)\subset\mathbb R^n$, and
$c\in L^\infty(B_r(x_0))$. Assume
$u\in\mathcal L^{2s}(\mathbb R^n)$,
$u\ge0$ a.e. in $\mathbb R^n$, and
\[
    \mathscr{L}_su+cu\ge0
    \qquad
    \text{in }\mathcal D'(B_r(x_0)\setminus\{x_0\}).
\]
If
\[
    u\ge m>0
    \qquad
    \text{a.e. in }
    B_r(x_0)\setminus B_{r/2}(x_0),
\]
then
\[
    u\ge\alpha m
    \qquad
    \text{a.e. in }B_r(x_0)\setminus\{x_0\},
\]
where
\[
    \alpha
    =
    \alpha\!\left(
        n,s,
        r^{2s}\|c^+\|_{L^\infty(B_r(x_0))}
    \right)>0,
    \qquad
    c^+=\max\{c,0\}.
\]
In particular, if $0<r\le1$, then $\alpha$ may be chosen to depend only
on $n$, $s$, and
$\|c^+\|_{L^\infty(B_r(x_0))}$. If $c\le0$, it may be chosen
independently of both $r$ and $c$.
\end{theorem}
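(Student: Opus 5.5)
We reduce first to a normalized setting and then use Theorem~\ref{thm-bocher} to recover the puncture. Translating, we may take $x_0=0$. Scaling does not preserve $\mathscr{L}_s$, so we do not rescale the operator. Instead we keep track of $r$: put $v(x)=u(rx)$, so that
\[
-r^{-2}\Delta v+r^{-2s}(-\Delta)^s v+c(rx)v\ge0 \qquad\text{in }B_1\setminus\{0\}.
\]
Multiplying by $r^{2s}$ gives the operator $-\lambda\Delta+(-\Delta)^s$ with $\lambda=r^{2s-2}$ and zero-order coefficient $\tilde c=r^{2s}c(r\,\cdot)$. For bounded $r$ this keeps everything uniform. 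Since the statement allows $\alpha$ to depend on $r^{2s}\|c^+\|$, the cleanest route is to work directly on $B_r$ and build a barrier whose constants depend only on that quantity.

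First we replace $c$ by $c^+$: on the set where $u\ge0$, $c^+u\ge cu$, so $\mathscr{L}_su+c^+u\ge0$ in $\mathcal D'(B_r\setminus\{0\})$. Applying Theorem~\ref{thm-bocher} (rescaled to $B_r$) gives
\[
\mathscr{L}_su+c^+u=\mu+a\delta_0\ge0 \qquad\text{in }\mathcal D'(B_r),
\]
so $u$ is a distributional supersolution on the whole ball. It then suffices to prove a minimum estimate: if $w\ge0$ in $\mathbb R^n$, $w\ge m$ on the annulus $A=B_r\setminus B_{r/2}$, and $\mathscr{L}_sw+c^+w\ge0$ in $B_r$, then $w\ge\alpha m$ in $B_r$.

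For the minimum estimate we use a comparison function. Fix a smooth radial bump $\psi$ with $0\le\psi\le1$, $\psi=1$ on $B_{r/2}$, $\operatorname{supp}\psi\subset B_{3r/4}$, and a constant $\beta$ to be chosen. Set $\phi=\beta m(\psi+\chi_A)$ for $|x|<r$, and $\phi=0$ outside $B_r$ (equivalently, $\beta m\chi_{B_r}$ smoothed inside). The cleaner choice is $\phi=\beta m\,\eta$, where $\eta\in C_c^\infty(B_r)$, $\eta=1$ on $B_{3r/4}$, $0\le\eta\le1$, and we compare only on $B_{r/2}$ while using $w\ge m$ on $A$. On $B_{r/2}$ we have $\eta=1$, so $-\Delta\eta=0$ there. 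Moreover
\[
(-\Delta)^s\eta(x)=C_{n,s}\int\frac{1-\eta(y)}{|x-y|^{n+2s}}\,dy\ge0 ,
\]
which is not strictly positive enough on its own. The key gain comes from the annulus: compare with $\phi=\beta m(\eta-\chi_{A'})$, or more simply write $w=w\chi_{B_{r/2}}+w\chi_{A}+w\chi_{B_r^c}$. Then in $B_{r/2}$ the part $w\chi_A\ge m\chi_A$ contributes
\[
-C_{n,s}\int_A \frac{w(y)}{|x-y|^{n+2s}}\,dy\le -c_0\,m\,r^{-2s} .
\]
Hence $z=w\chi_{B_{r/2}}$ satisfies, in $\mathcal D'(B_{r/2})$,
\[
\mathscr{L}_sz+c^+z\ge c_0 m r^{-2s},
\]
using that $w\ge0$ outside. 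Now let $\varphi$ be the torsion-type function $\varphi=\big((r/2)^2-|x|^2\big)_+$, for which $\mathscr{L}_s\varphi\le C(r^0+r^{2-2s})$ in $B_{r/2}$. Comparing $z$ with $t\,m\,r^{-2}\varphi$, whose size is of order $tm$, for small $t=t(n,s,r^{2s}\|c^+\|)$, a weak maximum principle in $B_{r/2}$ for supersolutions with $c^+\ge0$ gives $z\ge tmr^{-2}\varphi$. This only yields positivity away from $\partial B_{r/2}$, but since $w\ge m$ already holds on $A$ and the comparison can be run on $B_{5r/8}$ with annulus $B_r\setminus B_{5r/8}$, we obtain $w\ge\alpha m$ on $B_{r/2}$.

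\textbf{Main obstacle.} The hardest step is the weak maximum principle for distributional (merely $\mathcal L^{2s}$, measure-valued $\mathscr{L}_s$) supersolutions. We would prove it by mollification: since $\mathscr{L}_s w+c^+w$ is a nonnegative measure, convolve with $\rho_\varepsilon$. The term $c^+w$ does not commute with the convolution, so we would first treat $c^+w$ as a bounded-by-$\|c^+\|w$ perturbation. Alternatively, we would use the representation $w\ge G*(\ldots)$ via the Green function of $\mathscr{L}_s+\|c^+\|$ on the ball and argue with the positivity of that Green function. A Kato-type inequality for $(z-\phi)^-$ may be the quickest way through. Tracking the constants gives the dependence only on $n$, $s$ and $r^{2s}\|c^+\|$ (and on $r\le1$ for the $r^{2-2s}$ term). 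When $c\le0$ we have $c^+=0$, so $\alpha$ is independent of both $r$ and $c$.
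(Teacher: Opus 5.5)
There is a genuine gap in the quantitative step. Your comparison function $b=t\,m\,r^{-2}\varphi$ satisfies $-\Delta b=2n\,t\,m\,r^{-2}$ in $B_{r/2}$. The only positive source you extract from the annulus is $c_0\,m\,r^{-2s}$. Requiring $\mathscr{L}_sb+c^+b\le c_0\,m\,r^{-2s}$ therefore forces
\[
t\le\frac{c_0}{2n\,r^{2s-2}+C+\tfrac14\,r^{2s}\|c^+\|_{L^\infty}} .
\]
Since $2s-2<0$, this bound tends to $0$ like $r^{2-2s}$ as $r\to0$, even when $c\equiv0$. Your remark that the $r^{2-2s}$ term is harmless for $r\le1$ has the direction reversed.

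So your argument yields $\alpha=\alpha(n,s,r,\|c^+\|)$, which degenerates as $r\to0$. It does not give dependence on $r^{2s}\|c^+\|$ alone, uniformity for $r\le1$, or $r$-independence for $c\le0$. This is structural, not a matter of constants. At small scales, the Laplacian of any comparison function that curves on the comparison region costs order $r^{-2}$, while the annulus supplies only order $r^{-2s}$.

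The missing idea is to make the local part contribute with a favorable sign rather than as a cost. The paper proceeds as follows.
\begin{enumerate}
\item Replace $c$ by the constant $\Lambda=\|c^+\|_{L^\infty(B_r(x_0))}$. This is legitimate because $(\Lambda-c)u\ge0$.
\item Use the B\^ocher theorem to get $\mathscr{L}_su+\Lambda u\ge0$ in $\mathcal D'(B_r(x_0))$.
\item Mollify. Since $\Lambda$ is constant it commutes with convolution, so by Lemma~\ref{mollifi} $u^\delta$ is a smooth pointwise supersolution in $B_{r-\delta}(x_0)$, with $u^\delta\ge m$ on $B_{r-\delta}(x_0)\setminus B_{r/2+\delta}(x_0)$.
\item Take a minimum point $x_*$ of $u^\delta$ on $\overline{B_{r/2+\delta}(x_0)}$ with $u^\delta(x_*)<m$. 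Then $x_*$ is interior and $-\Delta u^\delta(x_*)\le0$.
\item Split the fractional integral at $x_*$ into the inner ball, the annulus, and the exterior. This gives
\[
(-\Delta)^su^\delta(x_*)\le C_{n,s}C_1r^{-2s}u^\delta(x_*)+C_{n,s}C_2r^{-2s}\bigl(u^\delta(x_*)-m\bigr),
\]
hence $u^\delta(x_*)/m\ge C_{n,s}C_2/\bigl(C_{n,s}(C_1+C_2)+\Lambda r^{2s}\bigr)$, uniformly in $\delta$.
\end{enumerate}

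This also disposes of your ``main obstacle''. No weak maximum principle for measure-data supersolutions is needed. The noncommutation of $c^+w$ with convolution arises only because you kept the variable coefficient $c^+$ instead of the constant $\Lambda$.

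Your splitting $w=w\chi_{B_{r/2}}+w\chi_A+w\chi_{B_r^c}$ would also work if you compared $w$ with the constant $\beta m$, for which $\mathscr{L}_s(\beta m)=0$, rather than with a torsion function. That is the minimum-point argument in barrier form.
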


\begin{theorem}[Antisymmetric maximum principle]
\label{thm:mixed-antisymmetric}
Let $n\ge2$, $s\in(0,1)$,
\[
    H=\{x_1>0\},
    \qquad
    \widetilde x=(-x_1,x'),
\]
and let $B_r(x_0)\subset H$ and
$c\in L^\infty(B_r(x_0))$. Assume
$u\in\mathcal L^{2s}(\mathbb R^n)$ is antisymmetric with respect to
$\partial H$, namely
\[
    u(\widetilde x)=-u(x)
    \qquad
    \text{for a.e. }x\in H,
\]
and $u\ge0$ a.e. in $H$. Suppose
\[
    \mathscr{L}_su+cu\ge0
    \qquad
    \text{in }\mathcal D'(B_r(x_0)\setminus\{x_0\}),
\]
and
\[
    u\ge m>0
    \qquad
    \text{a.e. in }
    B_r(x_0)\setminus B_{r/2}(x_0).
\]
Then
\[
    u\ge\alpha m
    \qquad
    \text{a.e. in }B_r(x_0)\setminus\{x_0\},
\]
where
\[
    \alpha
    =
    \alpha\!\left(
        n,s,
        r^{2s}\|c^+\|_{L^\infty(B_r(x_0))}
    \right)>0.
\]
As above, for $0<r\le1$ the constant may be chosen uniformly in $r$,
while for $c\le0$ it depends only on $n$ and $s$.
\end{theorem}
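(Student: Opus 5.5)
The proof reduces the antisymmetric case to the setting of Theorem~\ref{thm:mixed-positive}, working in $H$ and using the antisymmetry to control the nonlocal tail. By translation along $\partial H$ and scaling $x\mapsto x_0+rx$ we may assume $B_r(x_0)=B_1(x_0)$. Under this scaling $\mathscr{L}_s$ becomes $-\Delta+r^{2-2s}(-\Delta)^s$ after multiplying by $r^2$, so I will keep a parameter $\lambda=r^{2-2s}\in(0,\infty)$ in front of the fractional part. I will check that all constants are uniform in $\lambda$ once we normalize by $r^{2s}\|c^+\|$. For $r\le1$ they are uniform anyway, since the relevant quantity is bounded by $\|c^+\|$.

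\emph{Step 1: removing the puncture.} Since $u\ge0$ a.e. in $B_1(x_0)\subset H$ and $cu\le c^+u$ there, we have $\mathscr{L}_su+c^+u\ge0$ in $\mathcal D'(B_1(x_0)\setminus\{x_0\})$. Theorem~\ref{thm-bocher}, applied after translating $x_0$ to the origin with coefficient $c^+$, then gives $\mathscr{L}_su+c^+u=\mu+a\delta_{x_0}\ge0$ in $\mathcal D'(B_1(x_0))$. So $u$ is a nonnegative distributional supersolution on the whole ball, and the puncture plays no further role.

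\emph{Step 2: an antisymmetric barrier.} Fix a smooth radial bump $\psi$ with $0\le\psi\le1$, $\psi=1$ on $B_{1/2}$ and $\operatorname{supp}\psi\subset B_{3/4}$. Set $\psi_0(x)=\psi(x-x_0)$ and form the antisymmetrization $w(x)=\psi_0(x)-\psi_0(\widetilde x)$. Then $w=\psi_0$ in $H$ and $\operatorname{supp}\psi_0(\widetilde{\cdot})\subset \mathbb R^n\setminus\overline H$. For $x\in B_{3/4}(x_0)$ we compute
\[
\mathscr{L}_sw(x)=\mathscr{L}_s\psi_0(x)+C_{n,s}\int\frac{\psi_0(\widetilde y)}{|x-y|^{n+2s}}\,dy\ \ge\ \mathscr{L}_s\psi_0(x),
\]
because the reflected bump sits on the far side of the hyperplane and contributes with a positive sign. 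Hence $|\mathscr{L}_sw|$ (or its negative part) is bounded on the ball by $C(n,s)$. This bound is uniform in $\lambda\le$ const and, after the scaling, in general.

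\emph{Step 3: the comparison.} Consider $v=u-\varepsilon m\,w$ with $\varepsilon$ small. Then $v$ is antisymmetric. On the annulus $B_1(x_0)\setminus B_{1/2}(x_0)$ we have $v\ge m-\varepsilon m\ge0$, outside $B_{3/4}(x_0)$ within $H$ we have $v\ge u\ge0$, and
\[
\mathscr{L}_sv+c^+v\ge -\varepsilon m\,(\mathscr{L}_sw+c^+w).
\]
This inequality alone gives no sign, so the barrier has to be improved. The standard fix, as in \cite{LWX2018}, is to exploit the positive mass of $u$ on the annulus. For $x\in B_{1/2}(x_0)$, the annulus contributes to $(-\Delta)^s u$ the negative quantity $-C\int_{\text{annulus}} u(y)|x-y|^{-n-2s}\,dy\le -c_0m$. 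Using antisymmetry, the pairing of $y$ with $\widetilde y$ (where $|x-y|<|x-\widetilde y|$ for $x,y\in H$) keeps the tail over the negative half controlled, exactly as in the antisymmetric maximum principle for the fractional Laplacian.

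I would therefore run the minimum-point argument on the mollified function: suppose $\operatorname{ess\,inf}_{B_{1/2}}u<\alpha m$ and test the supersolution inequality against a localized antisymmetric test function concentrated near the minimum. This yields
\[
0\le \text{(local part)}+\alpha m\,\|c^+\| - c_0 m + C\alpha m,
\]
a contradiction for $\alpha$ small depending on $n$, $s$, and $r^{2s}\|c^+\|$. The local Laplacian part is handled by mean-value properties of distributional supersolutions, namely the superharmonic-type mean value inequality for $-\Delta u\ge -f$.

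The main obstacle is making this minimum-point argument rigorous for distributional (non-continuous) $u$, and in particular keeping the nonlocal gain $c_0 m$ from being destroyed by the local operator as $\lambda\to0$. For that regime I would instead use the local weak Harnack inequality for $-\Delta u+c^+u\ge -\lambda(\text{nonnegative-tail term})$, combined with the antisymmetric tail estimate above. The two regimes are then glued to obtain $\alpha$ depending only on $n$, $s$, and $r^{2s}\|c^+\|$.
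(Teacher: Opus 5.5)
Your outline ends with the same skeleton as the paper: fill in the puncture with Theorem~\ref{thm-bocher}, mollify, evaluate at a minimum point, and pair $y$ with $\widetilde y$. It does not close, however, because the obstacle you name is not a real one.

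\textbf{The missing observation.} Use the constant $\Lambda=\|c^+\|_{L^\infty(B_r(x_0))}$, which commutes with convolution. Lemma~\ref{mollifi} then gives $(\mathscr{L}_s+\Lambda)u^\delta\ge0$ in $B_{r-\delta}(x_0)$. Since $u^\delta$ is smooth and lies in $\mathcal L^{2s}(\mathbb R^n)$, this inequality holds pointwise, so no localized test function is needed.

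Let $x_*$ minimize $u^\delta$ over $\overline{B_{r/2+\delta}(x_0)}$ and suppose $u^\delta(x_*)<m$. Because $u^\delta\ge m$ on $B_{r-\delta}(x_0)\setminus B_{r/2+\delta}(x_0)$, $x_*$ is a minimum over the open ball $B_{r-\delta}(x_0)$. Hence $-\Delta u^\delta(x_*)\le0$, so the local part has the favorable sign and is simply discarded. What remains is
$0\le\Lambda u_*+C_{n,s}\widehat I_r(x_*)u_*+C_{n,s}\widehat J_r(x_*)(u_*-m)$,
which contains no trace of the Laplacian. In your rescaled variables, the fractional terms and the zero-order coefficient $r^2c^+=\lambda\,r^{2s}c^+$ all carry the factor $\lambda$, which cancels. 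So there is no degenerate regime $\lambda\to0$ and nothing to glue.

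\textbf{Why your proposed fix fails.} Writing $-\Delta u+c^+u\ge-\lambda(\cdots)$ with a controlled right-hand side needs an a priori upper bound on $(-\Delta)^su$ in the ball. The same is needed for your mean-value argument for $-\Delta u\ge-f$. Distributional supersolutions have no such bound. For example, for $u=\Gamma_{n,s}$ one has $(-\Delta)^su\sim\kappa_{n,2-2s}|x|^{2-2s-n}\to+\infty$ at the pole. The barrier of your Steps~2--3 plays no role and can be dropped.

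\textbf{Two further ingredients you must supply.}

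\emph{The reflected annulus.} You write the annulus gain with the kernel $|x-y|^{-n-2s}$ and say the reflected annulus $\widetilde A$ is merely ``controlled''. In fact $\widetilde A$ contributes $C_{n,s}\int_A\bigl(u(x)+u(z)\bigr)|x-\widetilde z|^{-n-2s}\,\mathrm{d}z$. This involves the possibly huge values of $u$ on $A$ with the wrong sign, so it must be absorbed into the annulus term. That is why the paper writes $(-\Delta)^su$ through $K_H$ plus $2C_{n,s}u(x)\int_H|x-\widetilde y|^{-n-2s}\,\mathrm{d}y$.

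The gain is then $-m\int_AK_H(x,y)\,\mathrm{d}y$. Positivity of $K_H$ alone does not give the needed bound $\int_AK_H\,\mathrm{d}y\ge C_4r^{-2s}$ uniformly in $(x_0)_1/r\in[1,\infty)$. The paper obtains it from the ball $E=B_{r/8}(x_0+3re_1/4)\subset A$, on which $|x-y|\le\frac{11}{8}r$ and $|x-\widetilde y|\ge\frac{17}{8}r$. The tail bound $\widehat I_r\le C_3r^{-2s}$ uses $x_1\ge r/2$ on $B_{r/2}(x_0)$.

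\emph{Nonnegativity of the mollification.} The tail estimate needs $u^\delta\ge0$ on all of $H$, including the strip $\{0<x_1<\delta\}$. This is why the mollifier is taken radial and radially nonincreasing: then $u^\delta(x)=\int_Hu(y)\bigl(j_\delta(x-y)-j_\delta(x-\widetilde y)\bigr)\,\mathrm{d}y\ge0$ for $x\in H$.
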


The quantity
\(r^{2s}\|c^+\|_{L^\infty(B_r(x_0))}\)
appearing in the two estimates is natural. At an interior minimum the
local part has the favorable sign. The contribution of the fractional
operator from an annulus whose scale is \(r\) is of order \(r^{-2s}\).
The competition with the zero-order term therefore occurs through the
dimensionless quantity \(r^{2s}\|c^+\|_\infty\). The same scaling enters
the antisymmetric argument after the fractional kernel is paired with its
reflection across the boundary hyperplane.

We briefly explain the main ideas of the proofs. For
Theorem~\ref{thm:fundamental-solution}, we start from the multiplier
\[
m(\xi)
=
\frac{1}{|\xi|^2+|\xi|^{2s}}
\]
and separate low and high frequencies. At high frequencies,
\[
m(\xi)
=
|\xi|^{-2}
-
|\xi|^{-(4-2s)}
+
O\!\left(|\xi|^{-(6-4s)}\right),
\]
while at low frequencies,
\[
m(\xi)
=
|\xi|^{-2s}
-
|\xi|^{2-4s}
+
O\!\left(|\xi|^{4-6s}\right).
\]
The homogeneous terms are inverted using the Fourier transform formulas
for Riesz kernels and their distributional continuation. When a
homogeneous exponent reaches a critical degree, the corresponding
distributional continuation produces the logarithmic contribution.
Cutoff multipliers and dyadic kernel estimates control the remainder
terms together with their derivatives.

The proof of Theorem~\ref{thm-bocher} follows the distributional strategy
of \cite[Theorems~3 and~4]{LWX2018}, adapted to the mixed operator. A
radial barrier first shows that the positive distribution on the punctured
ball has finite mass near the origin. After this measure is extended
across the puncture, the remaining defect is a distribution supported at
the origin. A cutoff argument shows that its order is at most one, and
hence it has the form
$a\delta_0+d\cdot\nabla\delta_0$.
A localized potential representation, together with the critical endpoint
non-integrability of the dipole term furnished by the gradient expansion in
Theorem~\ref{thm:fundamental-solution}, excludes \(d\ne0\). An averaged
estimate near the origin then implies \(a\ge0\).

The two maximum principles are obtained by applying
Theorem~\ref{thm-bocher} to restore the supersolution inequality across
the puncture. After mollification, the resulting inequality can be
evaluated at an interior minimum. In the antisymmetric case, the argument
also uses the strict positivity of the reflected kernel
\[
\frac{1}{|x-y|^{n+2s}}
-
\frac{1}{|x-\widetilde y|^{n+2s}}
>0,
\qquad
x,y\in H.
\]

The paper is organized as follows.
Section~\ref{sec:pf-thm1.1} is devoted to the construction of the
fundamental solution and the proof of its asymptotic expansions in
Theorem~\ref{thm:fundamental-solution}.
Section~\ref{sec:pf-bocher} establishes the B\^ocher-type theorem.
The positive and antisymmetric maximum principles are proved in
Section~\ref{sec:proof-maximum-principles}.
The appendices contain the auxiliary results used in these arguments,
including distributional properties of the mixed operator, Fourier
transform formulas for homogeneous distributions, and the cutoff and
dyadic kernel estimates required for the asymptotic analysis.

\section{Proof of Theorem \ref{thm:fundamental-solution}}\label{sec:pf-thm1.1}

\begin{proof}
\textbf{Step 1. Construction and frequency decomposition.}
Let $m(\xi)=(|\xi|^2+|\xi|^{2s})^{-1}$.
Since $m(\xi)\asymp|\xi|^{-2s}$ as $|\xi|\to0$, $m(\xi)\asymp|\xi|^{-2}$ as $|\xi|\to\infty$,
and \(n>2s\), the function \(m\) belongs to
\(L^1_{\mathrm{loc}}(\mathbb R^n)\) and defines a tempered distribution.
For every \(\varphi\in\mathcal S(\mathbb R^n)\),
\begin{equation*}
    \langle\Gamma_{n,s},\varphi\rangle
    =\frac{1}{(2\pi)^n}
    \int_{\mathbb R^n}m(\xi)\widehat\varphi(-\xi)\,\mathrm{d}\xi,
\end{equation*}
and the integral is absolutely convergent. Moreover,
\begin{equation*}
    |\xi|^2m(\xi),\ |\xi|^{2s}m(\xi)\in L^1_{\mathrm{loc}}(\mathbb R^n)
\end{equation*}
with at most polynomial growth, and
\begin{equation*}
    \bigl(|\xi|^2+|\xi|^{2s}\bigr)m(\xi)=1
\end{equation*}
almost everywhere. Hence \eqref{eq:fundamental-solution-equation} follows
in \(\mathcal S'(\mathbb R^n)\).

Choose a radial cutoff \(\chi\in C_c^\infty(\mathbb R^n)\) satisfying
\begin{equation*}
    \chi=1\quad\text{in }\{|\xi|\leq1\},
    \qquad
    \chi=0\quad\text{in }\{|\xi|\geq2\},
\end{equation*}
and set \(\eta=1-\chi\). Then
\begin{equation}
    \Gamma_{n,s}
    =\mathcal F^{-1}(\chi m)+\mathcal F^{-1}(\eta m).
    \label{eq:frequency-decomposition}
\end{equation}
For every multi-index \(\beta\),
\(\xi^\beta\chi(\xi)m(\xi)\in L^1\). Hence
\(\mathcal F^{-1}(\chi m)\in C^\infty(\mathbb R^n)\). For every multi-index \(\gamma\),
\[
|\partial_\xi^\gamma(\eta m)(\xi)|
\le C_\gamma|\xi|^{-2-|\gamma|},
\quad\text{for any } |\xi|\ge1.
\]
Lemma~\ref{lem:cutoff-symbol-estimates} therefore shows that
\(\mathcal F^{-1}(\eta m)\) is smooth away from the origin and rapidly
decreasing at infinity. Since \(m\), \(\chi\), and \(\eta\) are real-valued
and radial, both terms in \eqref{eq:frequency-decomposition} are
real-valued and radial.

\medskip
\textbf{Step 2. The high-frequency expansion.}
The exact identity
\begin{equation}
    m(\xi)
    =|\xi|^{-2}-|\xi|^{-(4-2s)}+R_\infty(\xi),
    \qquad
    R_\infty(\xi)
    =\frac{|\xi|^{-(6-4s)}}{1+|\xi|^{2s-2}},
    \label{eq:high-frequency-expansion}
\end{equation}
follows from \((1+z)^{-1}=1-z+z^2(1+z)^{-1}\), with
\(z=|\xi|^{2s-2}\). For every multi-index \(\rho\),
\begin{equation}
    |\partial_\xi^\rho R_\infty(\xi)|
    \leq C_\rho|\xi|^{-(6-4s)-|\rho|}
    \qquad\text{for any } |\xi|\geq1.
    \label{eq:high-frequency-remainder-estimate}
\end{equation}
Indeed, \(|\xi|^{2s-2}\leq1\) in this region, and differentiation of
radial powers gives
\(|\partial_\xi^\rho|\xi|^a|\leq C_{a,\rho}|\xi|^{a-|\rho|}\).
The Leibniz and chain rules then give
\eqref{eq:high-frequency-remainder-estimate}.

\medskip
\textbf{Step 3. The case \(n\geq3\) and \(4-2s<n\).}
Corollary~\ref{cor:cutoff-homogeneous-terms} yields
\begin{align}
    \mathcal F^{-1}(\eta|\xi|^{-2})(x)
    &=\frac{|x|^{2-n}}{(n-2)\omega_{n-1}}+H_0(x),
    \label{eq:origin-first-term}
    \\
    \mathcal F^{-1}(\eta|\xi|^{-(4-2s)})(x)
    &=\kappa_{n,4-2s}|x|^{4-2s-n}+H_1(x),
    \label{eq:origin-second-term}
\end{align}
where \(H_0,H_1\) are smooth and radial near the origin. The remaining
smooth contribution \(\mathcal F^{-1}(\chi m)\) has the same property.
Thus these regular terms are bounded, and their gradients are bounded.

Set \(\mu=6-4s\). By
\eqref{eq:high-frequency-remainder-estimate} and
Lemma~\ref{lem:cutoff-symbol-estimates},
\begin{equation*}
    \mathcal F^{-1}(\eta R_\infty)(x)
    =
    \begin{cases}
        O(|x|^{\mu-n}),&\mu<n,\\
        O(1+|\log|x||),&\mu=n,\\
        O(1),&\mu>n,
    \end{cases}
    \qquad |x|\to0.
\end{equation*}
If \(\mu<n\), then
\begin{equation*}
    \frac{|x|^{\mu-n}}{|x|^{4-2s-n}}
    =|x|^{2-2s}\to0.
\end{equation*}
If \(\mu\geq n\), then by \(4-2s-n<0\), so logarithmic and bounded
terms are also \(o(|x|^{4-2s-n})\). Hence
\begin{equation}
    \mathcal F^{-1}(\eta R_\infty)(x)
    =o\!\left(|x|^{4-2s-n}\right).
    \label{eq:origin-generic-function-remainder}
\end{equation}

Applying the differentiated estimate in
Lemma~\ref{lem:cutoff-symbol-estimates} gives
\begin{equation*}
    \nabla\mathcal F^{-1}(\eta R_\infty)(x)
    =
    \begin{cases}
        O(|x|^{\mu-n-1}),&\mu<n+1,\\
        O(1+|\log|x||),&\mu=n+1,\\
        O(1),&\mu>n+1.
    \end{cases}
\end{equation*}
In the first case the quotient by \(|x|^{3-2s-n}\) is again
\(|x|^{2-2s}\), in the remaining cases we use \(3-2s-n<0\). Therefore
\begin{equation}
    \nabla\mathcal F^{-1}(\eta R_\infty)(x)
    =o\!\left(|x|^{3-2s-n}\right).
    \label{eq:origin-generic-gradient-remainder}
\end{equation}
Combining \eqref{eq:frequency-decomposition},
\eqref{eq:high-frequency-expansion}, \eqref{eq:origin-first-term},
\eqref{eq:origin-second-term},
\eqref{eq:origin-generic-function-remainder}, and
\eqref{eq:origin-generic-gradient-remainder} proves
\eqref{eq:origin-generic-expansion}. The derivative of the second
homogeneous term is
\begin{equation*}
    \nabla\!\left(-\kappa_{n,4-2s}|x|^{4-2s-n}\right)
    =-(4-2s-n)\kappa_{n,4-2s}x|x|^{2-2s-n}.
\end{equation*}

\medskip
\textbf{Step 4. The case \(n=3\) and \(s=\frac12\).}
Retaining one additional term gives the exact decomposition
\begin{equation}
    m(\xi)
    =|\xi|^{-2}-|\xi|^{-3}+|\xi|^{-4}
     -\frac{|\xi|^{-5}}{1+|\xi|^{-1}}.
    \label{eq:critical-high-frequency-expansion}
\end{equation}
The first term contributes \((4\pi|x|)^{-1}\), modulo a smooth radial
function. Since \(\eta\) vanishes near the origin,
\(\eta|\xi|^{-3}=\eta\operatorname{Fp}|\xi|^{-3}\), and
\eqref{eq:finite-part-fourier-transform} gives
\begin{equation*}
    \mathcal F^{-1}(\eta|\xi|^{-3})(x)
    =\frac{1}{2\pi^2}\log\frac1{|x|}+H_2(x),
\end{equation*}
where \(H_2\) is smooth and radial near the origin. Moreover,
\begin{equation*}
    \mathcal F^{-1}(\eta|\xi|^{-4})(x)
    =\kappa_{3,4}|x|+H_3(x),
    \qquad \kappa_{3,4}=-\frac{1}{8\pi},
\end{equation*}
so this term and its gradient are bounded. The last multiplier in
\eqref{eq:critical-high-frequency-expansion} is a symbol of order
\(-5\), since \(5>3+1\), its inverse Fourier transform and its gradient
are bounded near the origin. The signs in
\eqref{eq:critical-high-frequency-expansion} prove
\eqref{eq:origin-critical-expansion}.

\medskip
\textbf{Step 5. The case \(n=3\) and \(0<s<\frac12\).}
Here \(4-2s\in(3,4)\), so this exponent is nonexceptional. Hence
\begin{equation}
    \mathcal F^{-1}(\eta|\xi|^{-(4-2s)})(x)
    =\kappa_{3,4-2s}|x|^{1-2s}+H_4(x),
    \label{eq:three-dimensional-subcritical-term}
\end{equation}
where \(H_4\) is smooth and radial. The remainder \(\eta R_\infty\)
is radial, has order \(-(6-4s)\), and
\begin{equation*}
    0<1-2s<\min\{(6-4s)-3,2\}.
\end{equation*}
The radial regularity statement in
Lemma~\ref{lem:cutoff-symbol-estimates}, with \(\beta=1-2s\), gives
\begin{equation*}
    \mathcal F^{-1}(\eta R_\infty)(x)
    =\mathcal F^{-1}(\eta R_\infty)(0)+o(|x|^{1-2s}),
    \qquad
    \nabla\mathcal F^{-1}(\eta R_\infty)(x)=o(|x|^{-2s}).
\end{equation*}
Every smooth radial function \(H\) satisfies
\begin{equation*}
    H(x)=H(0)+O(|x|^2),
    \qquad
    \nabla H(x)=O(|x|).
\end{equation*}
Collecting all constant terms into \(C_{3,s}\) and using
\eqref{eq:high-frequency-expansion} and
\eqref{eq:three-dimensional-subcritical-term} proves
\eqref{eq:origin-three-subcritical-expansion}.

\medskip
\textbf{Step 6. The case \(n=2\).}
The leading term is critical. By \eqref{eq:high-frequency-cutoff-critical}, we have
\begin{equation*}
    \mathcal F^{-1}(\eta|\xi|^{-2})(x)
    =\frac{1}{2\pi}\log\frac1{|x|}+H_5(x),
\end{equation*}
with \(H_5\) smooth and radial. Since \(4-2s\in(2,4)\) is
nonexceptional,
\begin{equation*}
    \mathcal F^{-1}(\eta|\xi|^{-(4-2s)})(x)
    =\kappa_{2,4-2s}|x|^{2-2s}+H_6(x).
\end{equation*}
Furthermore,
\begin{equation*}
    0<2-2s<\min\{(6-4s)-2,2\}.
\end{equation*}
Applying the radial regularity statement in
Lemma~\ref{lem:cutoff-symbol-estimates} with \(\beta=2-2s\), and using
\(H(x)=H(0)+O(|x|^2)\), \(\nabla H(x)=O(|x|)\) for smooth radial
functions, gives \eqref{eq:origin-two-dimensional-expansion} after the
constant terms are collected into \(C_{2,s}\).

\medskip
\textbf{Step 7. The case \(|x|\to\infty\) and \(s\neq\frac12\).}
The exact low-frequency expansion is
\begin{equation}
    m(\xi)
    =|\xi|^{-2s}-|\xi|^{2-4s}+R_0(\xi),
    \qquad
    R_0(\xi)=\frac{|\xi|^{4-6s}}{1+|\xi|^{2-2s}}.
    \label{eq:low-frequency-expansion}
\end{equation}
For every multi-index \(\rho\),
\begin{equation*}
    |\partial_\xi^\rho R_0(\xi)|
    \leq C_\rho|\xi|^{4-6s-|\rho|}
    \qquad\text{for any }0<|\xi|\leq2.
\end{equation*}
Since \(4s-2\in(-2,2)\setminus\{0\}\),
Corollary~\ref{cor:cutoff-homogeneous-terms} yields, for every
\(N>0\),
\begin{align*}
    \mathcal F^{-1}(\chi|\xi|^{-2s})(x)
    &=\kappa_{n,2s}|x|^{2s-n}+O(|x|^{-N}),\\
    \mathcal F^{-1}(\chi|\xi|^{2-4s})(x)
    &=\kappa_{n,4s-2}|x|^{4s-2-n}+O(|x|^{-N}).
\end{align*}
Because \(4-6s>-n\), the low-frequency estimate in
Lemma~\ref{lem:cutoff-symbol-estimates} gives
\begin{equation*}
    \mathcal F^{-1}(\chi R_0)(x)=O(|x|^{6s-4-n}),
    \qquad
    \nabla\mathcal F^{-1}(\chi R_0)(x)=O(|x|^{6s-5-n}).
\end{equation*}
The ratios to the respective second terms are
\begin{equation*}
    \frac{|x|^{6s-4-n}}{|x|^{4s-2-n}}
    =\frac{|x|^{6s-5-n}}{|x|^{4s-3-n}}
    =|x|^{2s-2}\to0.
\end{equation*}
The high-frequency contribution \(\mathcal F^{-1}(\eta m)\) is rapidly
decreasing. Combining these estimates with
\eqref{eq:low-frequency-expansion} proves
\eqref{eq:infinity-generic-expansion}.

\medskip
\textbf{Step 8. The case \(|x|\to\infty\) and \(s=\frac12\).}
The finite identity
\begin{equation}
    m(\xi)
    =|\xi|^{-1}-1+|\xi|-\frac{|\xi|^2}{1+|\xi|}
    \label{eq:critical-low-frequency-expansion}
\end{equation}
replaces \eqref{eq:low-frequency-expansion}. For every \(N>0\),
\begin{equation*}
    \mathcal F^{-1}(\chi|\xi|^{-1})(x)
    =\kappa_{n,1}|x|^{1-n}+O(|x|^{-N}),
\end{equation*}
and
\begin{equation*}
    \mathcal F^{-1}(\chi|\xi|)(x)
    =\kappa_{n,-1}|x|^{-n-1}+O(|x|^{-N}).
\end{equation*}
The inverse Fourier transform of \(\chi\) is a Schwartz function. The
last multiplier in \eqref{eq:critical-low-frequency-expansion} satisfies
the low-frequency symbol estimates with exponent \(2\). Hence
\begin{equation*}
    \mathcal F^{-1}\!\left(\chi\frac{|\xi|^2}{1+|\xi|}\right)(x)
    =O(|x|^{-n-2}),
    \qquad
    \nabla\mathcal F^{-1}\!\left(\chi\frac{|\xi|^2}{1+|\xi|}\right)(x)
    =O(|x|^{-n-3}).
\end{equation*}
Together with the rapid decay of \(\mathcal F^{-1}(\eta m)\), this proves
\eqref{eq:infinity-critical-expansion}.  Notice that every gradient
remainder used above comes directly from the differentiated symbol
estimates in Lemma~\ref{lem:cutoff-symbol-estimates}. In particular, no
undifferentiated little-$o$ term is differentiated.
\end{proof}

\section{Proof of Theorem \ref{thm-bocher}}\label{sec:pf-bocher}

\begin{proof}
Put $\mathscr{L}_s=-\Delta+(-\Delta)^s$ and $T=\mathscr{L}_su+cu\in\mathcal D'(B_1\setminus\{0\})$.
Since $T\ge0$, it is a nonnegative Radon measure on
$B_1\setminus\{0\}$, which we denote by $\mu$. We extend $\mu$ to $B_1$ by $\mu(\{0\})=0$. The argument below follows the
distributional scheme of \cite[Theorems~3 and~4]{LWX2018}, with the
mixed local--nonlocal estimates made explicit.

\medskip
\noindent\textbf{Step 1. $\mu$ has finite mass near $0$.}
Set $M=\|c\|_{L^\infty(B_1)}$.
Choose a radial $\eta\in C_c^\infty(B_2)$ with $\eta=1$ in $B_1$, and fix $0<\beta<2s$ and $0<\alpha<1$.
For $0<\varepsilon<e^{-1}$ define
\begin{equation}\label{eq:bocher-barrier}
\phi_\varepsilon(r)=
\begin{cases}
1+\eta(2r)\displaystyle\left[r^\beta-
\left(\dfrac{\log r}{\log\varepsilon}\right)^\alpha\right],&n=2,\\[1.2ex]
1+\eta(r)r^\beta-\left(\dfrac\varepsilon r\right)^\alpha,&n\ge3.
\end{cases}
\end{equation}
For $r<1/4$ the cutoffs in \eqref{eq:bocher-barrier} are equal to one.
If $n\ge3$, then
\begin{align}
-\Delta\phi_\varepsilon(r)
&=-\beta(\beta+n-2)r^{\beta-2}
  -\alpha(n-2-\alpha)\varepsilon^\alpha r^{-\alpha-2},
\notag\\
|(-\Delta)^s\phi_\varepsilon(r)|
&\le C r^{\beta-2s}
   +C\varepsilon^\alpha r^{-\alpha-2s}.
\label{eq:bocher-frac-nge3}
\end{align}
Indeed, since \(0<\alpha<1<n-2s\),
Proposition~\ref{prop:homogeneous-fourier-transform} gives
\[
(-\Delta)^s(|x|^{-\alpha})
=C_{n,s,\alpha}|x|^{-\alpha-2s}
\qquad\text{in }\mathbb R^n\setminus\{0\}.
\]
It remains to estimate \(F(x)=\eta(|x|)|x|^\beta\). For
\(r=|x|<1/4\), the symmetric integral formula gives
\[
(-\Delta)^sF(x)
=\frac{C_{n,s}}2\int_{\mathbb R^n}
\frac{2F(x)-F(x+z)-F(x-z)}{|z|^{n+2s}}\,\mathrm{d}z.
\]
Since \(F\) is smooth in \(B_{r/2}(x)\),
\[
|2F(x)-F(x+z)-F(x-z)|
\le
\begin{cases}
C r^{\beta-2}|z|^2,& |z|<r/2,\\
C|z|^\beta,& r/2\le |z|\le3,\\
C,& |z|>3.
\end{cases}
\]
Because \(0<\beta<2s<2\), integration in the three regions yields
\(|(-\Delta)^sF(x)|\le Cr^{\beta-2s}\). Together with the estimate for
\(|x|^{-\alpha}\), this proves \eqref{eq:bocher-frac-nge3}, with \(C\)
independent of \(\varepsilon\).

If $n=2$, let $L=\log(1/r)$ and $E=\log(1/\varepsilon)$.
Then
\begin{align*}
-\Delta\phi_\varepsilon(r)
&=-\beta^2r^{\beta-2}
  -\alpha(1-\alpha)E^{-\alpha}L^{\alpha-2}r^{-2},
\\
|(-\Delta)^s\phi_\varepsilon(r)|
&\le Cr^{\beta-2s}
 +CE^{-\alpha}r^{-2s}(1+L^\alpha).
\end{align*}
Indeed, for
$F(x)=\eta(2|x|)(\log(1/|x|))^\alpha$, splitting the defining integral
into $B_{r/2}(x)$ and its complement gives
\[
|(-\Delta)^sF(x)|\le Cr^{-2s}(1+L^\alpha),
\qquad r=|x|<\frac14.
\]
Since
\[
r^{2-2s}\to0,
\qquad
r^{2-2s}(L^{2-\alpha}+L^2)\to0,
\quad\text{as }r\downarrow0,
\]
the preceding estimates imply that there exists
$r_0\in(0,1/8)$, independent of $\varepsilon$, such that
\[
(\mathscr{L}_s+M)\phi_\varepsilon\le0
\quad\text{in }\mathcal D'(B_{2r_0}).
\]
Set $\psi_\varepsilon=(\phi_\varepsilon)^+$. Proposition~\ref{construct-super}
gives
\[
(\mathscr{L}_s+M)\psi_\varepsilon\le0
\quad\text{in }\mathcal D'(B_{2r_0}).
\]
Setting $\psi_\varepsilon^\delta=\psi_\varepsilon*j_\delta$,
Lemma~\ref{mollifi} gives, for all sufficiently small $\delta>0$,
\[
(\mathscr{L}_s+M)\psi_\varepsilon^\delta\le0
\quad\text{in }B_{r_0}.
\]
Choose $h\in C_c^\infty(B_{3r_0/4})$ with $0\le h\le1$ and $h=1$ in $B_{r_0/2}$.
Taking $\delta$ smaller if necessary,
$h\psi_\varepsilon^\delta\in C_c^\infty(B_1\setminus\{0\})$.  Put
$\psi=\psi_\varepsilon^\delta$. By the product identity
\begin{equation*}
\begin{aligned}
\mathscr{L}_s(h\psi)(x)
&=h(x)\mathscr{L}_s\psi(x)+\psi(x)\mathscr{L}_sh(x)
  -2\nabla h(x)\cdot\nabla\psi(x)\\
&\quad-C_{n,s}\int_{\mathbb R^n}
\frac{(h(x)-h(y))(\psi(x)-\psi(y))}{|x-y|^{n+2s}}\,\mathrm{d}y,
\end{aligned}
\end{equation*}
we have
\begin{equation*}
\begin{aligned}
0&\le\int_{B_1}h\psi\,\mathrm{d}\mu
=T(h\psi)\\
&=\int_{B_1}u\,\mathscr{L}_s(h\psi)\,\mathrm{d}x
  +\int_{B_1}cuh\psi\,\mathrm{d}x
  +\int_{\mathbb R^n\setminus B_1}u\,\mathscr{L}_s(h\psi)\,\mathrm{d}x\\
&\le \int_{B_1}u\psi\,\mathscr{L}_sh\,\mathrm{d}x
 -2\int_{B_1}u\,\nabla h\cdot\nabla\psi\,\mathrm{d}x\\
&\quad-C_{n,s}\int_{B_1}u(x)
 \int_{\mathbb R^n}
 \frac{(h(x)-h(y))(\psi(x)-\psi(y))}{|x-y|^{n+2s}}\,\mathrm{d}y\,\mathrm{d}x
 +\int_{\mathbb R^n\setminus B_1}u\,\mathscr{L}_s(h\psi)\,\mathrm{d}x.
\end{aligned}
\end{equation*}
where the last inequality uses the fact that
\[
\int_{B_1}uh\bigl[(\mathscr{L}_s+M)\psi\bigr] \,\mathrm{d}x
+\int_{B_1}(c-M)uh\psi\,\mathrm{d}x\le0.
\]
Uniformly in $\varepsilon$ and $\delta$,
\begin{align}
&0\le\psi\le C,
\qquad
|\nabla\psi|\le C
\quad\text{on }\operatorname{supp}\nabla h,
\notag\\&
\sup_{x\in B_1}
\left|
\int_{\mathbb R^n}
\frac{(h(x)-h(y))(\psi(x)-\psi(y))}{|x-y|^{n+2s}}\,\mathrm{d}y
\right|\le C,
\label{eq:commutator-uniform}\\&
|\mathscr{L}_s(h\psi)(x)|\le
\frac{C}{1+|x|^{n+2s}},
\qquad x\in\mathbb R^n\setminus B_1.
\notag
\end{align}
Indeed, in \eqref{eq:commutator-uniform} either $h$ is constant near $x$,
or both differences are $O(|x-y|)$. Hence, the local singularity is
$|x-y|^{-n-2s+2}\in L^1_{\rm loc}$ because $s<1$. To make the
bound uniform, let \(K=\operatorname{supp}\nabla h\). Since
\(K\Subset B_{r_0}\setminus\{0\}\), choose \(\rho>0\) so that the
\(3\rho\)-neighborhood of \(K\) is contained in
\(B_{r_0}\setminus\{0\}\). The explicit barriers and mollification give
\[
\|\nabla\psi_\varepsilon^\delta\|_{L^\infty(
\{\operatorname{dist}(x,K)<3\rho\})}\le C.
\]
If \(\operatorname{dist}(x,K)\ge2\rho\), then \(h\) is constant on
\(B_\rho(x)\). Otherwise, when \(|x-y|<\rho\), both factors in the
numerator are bounded by \(C|x-y|\). Hence, the local part is bounded by
\(C\int_{|z|<\rho}|z|^{-n-2s+2}\,\mathrm{d}z\). In
\(|x-y|\ge\rho\), the uniform bounds of \(h\) and
\(\psi_\varepsilon^\delta\) give the bound
\(C\int_{|z|\ge\rho}|z|^{-n-2s}\,\mathrm{d}z\). This proves
\eqref{eq:commutator-uniform} with a constant independent of
\(\varepsilon\) and \(\delta\).  Consequently,
\begin{equation}\label{eq:mu-finite-bound}
0\le\int_{B_{r_0/2}}\psi_\varepsilon^\delta\,\mathrm{d}\mu
\le C\left(
\int_{B_1}u\,\mathrm{d}x
+\int_{\mathbb R^n\setminus B_1}
\frac{|u(x)|}{1+|x|^{n+2s}}\,\mathrm{d}x
\right)\le C.
\end{equation}
For fixed $\varepsilon$, $\psi_\varepsilon^\delta\to\psi_\varepsilon$ locally uniformly as $\delta\downarrow0$, so Fatou's lemma and \eqref{eq:mu-finite-bound} give $\int_{B_{r_0/2}}\psi_\varepsilon\,\mathrm{d}\mu\le C$. On $0<|x|<r_0/2$ we have $\psi_\varepsilon(x)\to1+|x|^\beta$ as $\varepsilon\downarrow0$; a second application of Fatou's lemma yields $\mu(B_{r_0/2})<\infty$. Hence $\mu$ is a Radon measure on $B_1$.

\medskip
\noindent\textbf{Step 2. The singular distribution has order at most one.}
Set $S=\mathscr{L}_su+cu-\mu\in\mathcal D'(B_1)$, so $\operatorname{supp}S\subset\{0\}$.
Let $H\in C_c^\infty(B_1)$ satisfy $H(0)=0$ and $\nabla H(0)=0$.
Choose $\rho\in C_c^\infty(B_2)$ with $\rho=1$ in $B_1$ and put $H_\varepsilon(x)=\rho(x/\varepsilon)H(x)$.
Since $S(H)=S(H_\varepsilon)$,
\[
|H_\varepsilon|\le C\varepsilon^2,
\qquad
\operatorname{supp}H_\varepsilon\subset B_{2\varepsilon},
\qquad
\|D^2H_\varepsilon\|_{L^\infty}\le C.
\]
Writing $H_\varepsilon(x)=\varepsilon^2G_\varepsilon(x/\varepsilon)$,
with $\|G_\varepsilon\|_{C^2}\le C$, gives
\begin{equation*}
|(-\Delta)^sH_\varepsilon(x)|\le
\begin{cases}
C\varepsilon^{2-2s},&|x|<4\varepsilon,\\
C\varepsilon^{n+2}|x|^{-n-2s},&|x|\ge4\varepsilon.
\end{cases}
\end{equation*}
Therefore
\begin{align*}
|S(H)|=|S(H_\varepsilon)|
&\le C\int_{B_{2\varepsilon}}|u|\,\mathrm{d}x
 +C\varepsilon^{2-2s}\int_{B_{4\varepsilon}}|u|\,\mathrm{d}x\\
&\quad+C\varepsilon^{n+2}
\int_{\mathbb R^n\setminus B_{4\varepsilon}}
\frac{|u(x)|}{|x|^{n+2s}}\,\mathrm{d}x
 +C\varepsilon^2\int_{B_{2\varepsilon}}|cu|\,\mathrm{d}x
 +C\varepsilon^2\mu(B_{2\varepsilon})\to0.
\end{align*}
Here, for the third term, we split $4\varepsilon<|x|<1$ and $|x|\ge1$ and use
\[
\varepsilon^{n+2}
\int_{4\varepsilon<|x|<1}
\frac{|u(x)|}{|x|^{n+2s}}\,\mathrm{d}x
\le C\varepsilon^{2-2s}\int_{B_1}|u|\,\mathrm{d}x.
\]
Hence
\[
S(H)=0
\quad\text{whenever}\quad
H(0)=|\nabla H(0)|=0.
\]
Let $\zeta\in C_c^\infty(B_1)$ satisfy $\zeta=1$ near $0$, and define
\begin{align*}
a&=S(\zeta)
=\int_{\mathbb R^n}u\,\mathscr{L}_s\zeta\,\mathrm{d}x
 +\int_{B_1}cu\zeta\,\mathrm{d}x-
 \int_{B_1}\zeta\,\mathrm{d}\mu,
\\
d_j&=-S(x_j\zeta)
=-\int_{\mathbb R^n}u\,\mathscr{L}_s(x_j\zeta)\,\mathrm{d}x
 -\int_{B_1}cu\,x_j\zeta\,\mathrm{d}x
 +\int_{B_1}x_j\zeta\,\mathrm{d}\mu.
\end{align*}
For any $\varphi\in C_c^\infty(B_1)$, set $H_\varphi=\varphi-(\varphi(0)+\nabla\varphi(0)\cdot x)\zeta$. It
satisfies $H_\varphi(0)=|\nabla H_\varphi(0)|=0$.  Thus
\[
S(\varphi)=a\varphi(0)-d\cdot\nabla\varphi(0),
\]
and therefore
\begin{equation}\label{eq:B1-equation-a-d-new}
\mathscr{L}_su+cu=\mu+a\delta_0+d\cdot\nabla\delta_0
\quad\text{in }\mathcal D'(B_1).
\end{equation}

\medskip
\noindent\textbf{Step 3. We show that $d = 0$.}
Let $\Gamma=\Gamma_{n,s}$ be the fundamental solution of $\mathscr L_s$. Set $w=u-a\Gamma-d\cdot\nabla\Gamma$ and write $\mathrm{d}\nu=\mathrm{d}\mu-cu\,\mathrm{d}x$.
Then
\[
\mathscr{L}_sw=\nu\quad\text{in }\mathcal D'(B_1).
\]
Choose $\xi\in C_c^\infty(B_{7/8})$, $\xi=1$ in $B_{3/4}$, and define
\begin{equation}\label{eq:v-def-new}
v(x)=\int_{B_{7/8}}\Gamma(x-y)\xi(y)\,\mathrm{d}\nu(y).
\end{equation}
By Theorem~\ref{thm:fundamental-solution}, $\Gamma,\nabla\Gamma\in L^p_{\rm loc}(\mathbb R^n)$ and $v\in W^{1,p}_{\rm loc}(\mathbb R^n)$ for every $1<p<n/(n-1)$.
Moreover
\[
\int_{\mathbb R^n}\frac{|v(x)|}{1+|x|^{n+2s}}\,\mathrm{d}x
\le |\nu|(B_{7/8})
\sup_{|y|<7/8}
\int_{\mathbb R^n}
\frac{|\Gamma(x-y)|}{1+|x|^{n+2s}}\,\mathrm{d}x<\infty.
\]
The asymptotics in Theorem~\ref{thm:fundamental-solution} also give $\Gamma,\nabla\Gamma\in\mathcal L^{2s}(\mathbb R^n)$; hence $w-v\in\mathcal L^{2s}(\mathbb R^n)$ and satisfies
\[
\mathscr{L}_s(w-v)=0\quad\text{in }\mathcal D'(B_{3/4}).
\]
Theorem~\ref{smooth-harmonic} gives $w-v\in C^\infty(B_{3/4})$ and hence $w\in W^{1,p}(B_{1/2})$ for every $1<p<\frac n{n-1}$. Choosing any such $p>1$, its Sobolev conjugate satisfies $p^*=np/(n-p)>n/(n-1)$. Thus, Sobolev embedding, together with $\Gamma\in L^{n/(n-1)}(B_{1/2})$, gives
$w+a\Gamma\in L^{n/(n-1)}(B_{1/2})$.
If $d\ne0$, Theorem~\ref{thm:fundamental-solution} gives, as \(|x|\to0\),
\[
d\cdot\nabla\Gamma(x)
=-\frac{d\cdot x}{\omega_{n-1}|x|^n}+o(|x|^{1-n}).
\]
Set
\[
\mathcal C_d
=\left\{x\ne0:d\cdot x\ge\frac12|d|\,|x|\right\},
\]
then there exists
\(r_d>0\) such that
\[
(d\cdot\nabla\Gamma)^-(x)
\ge\frac{|d|}{4\omega_{n-1}}|x|^{1-n},
\qquad x\in\mathcal C_d,\quad0<|x|<r_d.
\]
and hence
\[
(d\cdot\nabla\Gamma)^-\notin L^{n/(n-1)}(B_{1/2}).
\]
On the other hand,
\[
d\cdot\nabla\Gamma=u-(w+a\Gamma),
\qquad
(d\cdot\nabla\Gamma)^-
\le u^-+|w+a\Gamma|=|w+a\Gamma|\in L^{n/(n-1)}(B_{1/2}),
\]
which yields a contradiction. Thus $d=0$.

\medskip
\noindent\textbf{Step 4. Finally, we prove that $a\ge0$.}
Now
\[
\mathscr{L}_su=\nu+a\delta_0,
\qquad
\nu=\mu-cu\,\mathrm{d}x,
\qquad
|\nu|(\{0\})=0.
\]
With $v$ as in \eqref{eq:v-def-new},
\begin{equation}\label{eq:u-decomp-new}
u=v+a\Gamma+H
\quad\text{in }B_{1/2},
\qquad
H\in C^\infty(B_{1/2}).
\end{equation}

Assume first $n\ge3$. By Theorem \ref{thm:fundamental-solution}, we have $|\Gamma(z)|\le C|z|^{2-n}$ in $B_1$. Hence, for $0<\delta<\varepsilon/4<1/8$, \eqref{eq:v-def-new} gives
\begin{equation*}
\begin{aligned}
\frac1{\delta^2}\int_{B_\delta}|v(x)|\,\mathrm{d}x
&\le\frac C{\delta^2}
\int_{B_{2\varepsilon}}
\int_{B_\delta}|x-y|^{2-n}\,\mathrm{d}x\,\mathrm{d}|\nu|(y)\\
&\quad+\frac C{\delta^2}
\int_{B_{7/8}\setminus B_{2\varepsilon}}
\int_{B_\delta}|x-y|^{2-n}\,\mathrm{d}x\,\mathrm{d}|\nu|(y)\\
&\le C|\nu|(B_{2\varepsilon})
 +C\left(\frac\delta\varepsilon\right)^{n-2}|\nu|(B_{7/8}),
\end{aligned}
\end{equation*}
where we use
\[
\sup_{y\in\mathbb R^n}
\int_{B_\delta}|x-y|^{2-n}\,\mathrm{d}x\le C\delta^2.
\]
Fixing $\varepsilon$ and letting $\delta\downarrow0$ in the preceding bound gives
$\limsup_{\delta\downarrow0}\delta^{-2}\int_{B_\delta}|v|\,\mathrm{d}x\le C|\nu|(B_{2\varepsilon})$. Since $|\nu|(\{0\})=0$, letting $\varepsilon\downarrow0$ yields
\[
\frac1{\delta^2}\int_{B_\delta}|v|\,\mathrm{d}x\to0
\qquad\text{as }\delta\downarrow0.
\]
Theorem~\ref{thm:fundamental-solution} and $H\in C^\infty(B_{1/2})$ also give
\begin{equation}\label{eq:Gamma-average-nge3}
\frac1{\delta^2}\int_{B_\delta}\Gamma(x)\,\mathrm{d}x
\to\frac1{2(n-2)},
\qquad
\frac1{\delta^2}\int_{B_\delta}|H|\,\mathrm{d}x\to0,\quad\text{as }\delta\rightarrow0.
\end{equation}
If $a<0$, \eqref{eq:u-decomp-new}--\eqref{eq:Gamma-average-nge3} imply
\[
\limsup_{\delta\downarrow0}
\frac1{\delta^2}\int_{B_\delta}u\,\mathrm{d}x
\le\frac{a}{2(n-2)}<0,
\]
contrary to $u\ge0$.

For $n=2$, by Theorem \ref{thm:fundamental-solution}, we have  
$
|\Gamma(z)|\le C(\log\frac1{|z|}+1)
$
in $B_{1}$. Hence, for $0<\delta<\varepsilon/4$,
\begin{align*}
\frac1{\delta^2\log(1/\delta)}
\int_{B_\delta}|v(x)|\,\mathrm{d}x
&\le C|\nu|(B_{2\varepsilon})+C\frac{1+\log(1/\varepsilon)}{\log(1/\delta)}
|\nu|(B_{7/8}).
\end{align*}
Fixing $\varepsilon$ and letting $\delta\downarrow0$ first gives
$\limsup_{\delta\downarrow0}[\delta^2\log(1/\delta)]^{-1}\int_{B_\delta}|v|\,\mathrm{d}x\le C|\nu|(B_{2\varepsilon})$. Since $|\nu|(\{0\})=0$, letting $\varepsilon\downarrow0$ yields
\[
\frac1{\delta^2\log(1/\delta)}\int_{B_\delta}|v|\,\mathrm{d}x\to0
\qquad\text{as }\delta\downarrow0.
\]
Moreover,
\begin{equation*}
\frac1{\delta^2\log(1/\delta)}
\int_{B_\delta}\Gamma(x)\,\mathrm{d}x\to\frac12,
\qquad
\frac1{\delta^2\log(1/\delta)}
\int_{B_\delta}|H|\,\mathrm{d}x\to0,\quad\text{as }\delta\rightarrow0.
\end{equation*}
Again, $a<0$ contradicts $u\ge0$.  Therefore, $a\ge0$, and
\eqref{eq:B1-equation-a-d-new} becomes
\[
\mathscr{L}_su+cu=\mu+a\delta_0
\quad\text{in }\mathcal D'(B_1).
\]
\end{proof}

\section{Proofs of the maximum principles}
\label{sec:proof-maximum-principles}

The following argument is the mixed-operator
analogue of \cite[Theorems~1 and~2]{LWX2018}.

\begin{proof}[Proof of Theorem~\ref{thm:mixed-positive}]
Set $\Lambda=\|c^+\|_{L^\infty(B_r(x_0))}$.
Since $\Lambda-c\ge0$ and $u\ge0$ in $B_r(x_0)$,
\[
\mathscr{L}_su+\Lambda u=(\mathscr{L}_su+cu)+(\Lambda-c)u\ge0
\quad\text{in }\mathcal D'(B_r(x_0)\setminus\{x_0\}).
\]
The corresponding ball-version of Theorem~\ref{thm-bocher}, applied with the constant coefficient $\Lambda$, then gives
\[
\mathscr{L}_su+\Lambda u\ge0\quad\text{in }\mathcal D'(B_r(x_0)).
\]
We first assume that $u$ is smooth in the ball.  Let $x_*\in\overline
{B_{r/2}(x_0)}$ be a minimum point.  If $u(x_*)\ge m$ there is nothing
to prove.  Otherwise $x_*$ is an interior point and
$-\Delta u(x_*)\le0$.  Since $u(x_*)\le u$ in $B_{r/2}(x_0)$,
$u\ge m$ on $B_r(x_0)\setminus B_{r/2}(x_0)$, and $u\ge0$ in
$\mathbb R^n$, we obtain
\begin{align*}
(-\Delta)^s u(x_*)
&\le C_{n,s} I_r(x_*)u(x_*)
 +C_{n,s}J_r(x_*)\bigl(u(x_*)-m\bigr),
\end{align*}
where
\[
I_r(x)=\int_{\mathbb R^n\setminus B_r(x_0)}
\frac{\mathrm{d}y}{|x-y|^{n+2s}},
\qquad
J_r(x)=\int_{B_r(x_0)\setminus B_{r/2}(x_0)}
\frac{\mathrm{d}y}{|x-y|^{n+2s}}.
\]
For $x\in B_{r/2}(x_0)$,
\[
I_r(x)\le C_1 r^{-2s},\qquad J_r(x)\ge C_2 r^{-2s},
\]
with $C_1,C_2>0$ depending only on $n,s$.  Evaluating
$\mathscr{L}_su+\Lambda u\ge0$ at $x_*$ therefore gives
\[
\frac{u(x_*)}{m}
\ge
\frac{C_{n,s}C_2}
{C_{n,s}(C_1+C_2)+\Lambda r^{2s}}
=\alpha(n,s,\Lambda r^{2s})>0.
\]
This proves the estimate for smooth supersolutions.

For the distributional case, Lemma~\ref{mollifi} gives
\[
(\mathscr{L}_s+\Lambda)u^\delta\ge0
\quad\text{in }B_{r-\delta}(x_0).
\]
Moreover
\[
u^\delta\ge m
\quad\text{in }
B_{r-\delta}(x_0)\setminus B_{r/2+\delta}(x_0).
\]
Repeating the preceding minimum argument with the radii
$r/2+\delta$ and $r-\delta$ gives, for $0<\delta<r/8$,
\[
u^\delta\ge \alpha_0m
\quad\text{in }B_{r/2+\delta}(x_0),
\]
where
\[
\alpha_0=\alpha_0(n,s,\Lambda r^{2s})>0
\]
is independent of $\delta$.  Letting $\delta\downarrow0$ and using
$u^\delta\to u$ in $L^1_{\rm loc}$ proves the assertion a.e., compare
\cite[Theorem~1]{LWX2018}.
\end{proof}

\begin{proof}[Proof of Theorem~\ref{thm:mixed-antisymmetric}]
Again put $\Lambda=\|c^+\|_{L^\infty(B_r(x_0))}$.  Since $u\ge0$ in
$B_r(x_0)\subset H$,
\[
\mathscr{L}_su+\Lambda u=(\mathscr{L}_su+cu)+(\Lambda-c)u\ge0
\quad\text{in }\mathcal D'(B_r(x_0)\setminus\{x_0\}).
\]
The corresponding ball-version of Theorem~\ref{thm-bocher}, applied with coefficient $\Lambda$, gives
\[
\mathscr{L}_su+\Lambda u\ge0\quad\text{in }\mathcal D'(B_r(x_0)).
\]
Assume first that $u$ is smooth.  For $x,y\in H$ set
\[
K_H(x,y)=\frac1{|x-y|^{n+2s}}-
\frac1{|x-\widetilde y|^{n+2s}}>0.
\]
Using antisymmetry and pairing $H$ with its reflection,
\begin{equation}\label{eq:antisymmetric-fractional-representation}
(-\Delta)^s u(x)
=C_{n,s}\,\mathrm{PV}\int_H (u(x)-u(y))K_H(x,y)\,\mathrm{d}y
+2C_{n,s}u(x)\int_H\frac{\mathrm{d}y}{|x-\widetilde y|^{n+2s}}.
\end{equation}
Let $x_*$ be a minimum of $u$ on $\overline{B_{r/2}(x_0)}$.  If
$u(x_*)<m$, then $x_*$ is interior and $-\Delta u(x_*)\le0$.  Splitting
the first integral in \eqref{eq:antisymmetric-fractional-representation}
into $B_{r/2}(x_0)$, the annulus $B_{r}(x_0)\backslash B_{r/2}(x_0)$, and $H\setminus B_r(x_0)$ gives
\[
(-\Delta)^s u(x_*)
\le C_{n,s}\widehat I_r(x_*)u(x_*)
+C_{n,s}\widehat J_r(x_*)\bigl(u(x_*)-m\bigr),
\]
where
\begin{align*}
\widehat I_r(x)
&=\int_{H\setminus B_r(x_0)}K_H(x,y)\,\mathrm{d}y
 +2\int_H\frac{\mathrm{d}y}{|x-\widetilde y|^{n+2s}},\\
\widehat J_r(x)
&=\int_{B_r(x_0)\setminus B_{r/2}(x_0)}K_H(x,y)\,\mathrm{d}y.
\end{align*}
Since $B_r(x_0)\subset H$, every $x\in B_{r/2}(x_0)$ satisfies
$x_1\ge r/2$.  Hence
$
\widehat I_r(x)\le C_3r^{-2s}.
$
Moreover $\widehat J_r(x)\ge C_4r^{-2s}$.  Indeed, set
\[
E=B_{r/8}(x_0+3re_1/4)
   \subset B_r(x_0)\setminus B_{r/2}(x_0).
\]
For $x\in B_{r/2}(x_0)$ and $y\in E$,
\[
|x-y|\le\frac{11}{8}r,
\qquad
|x-\widetilde y|\ge x_1+|y_1|\ge\frac{17}{8}r,
\]
because $B_r(x_0)\subset H$ implies $(x_0)_1\ge r$.  Consequently
\[
K_H(x,y)\ge
\left[\left(\frac8{11}\right)^{n+2s}
      -\left(\frac8{17}\right)^{n+2s}\right]r^{-n-2s},
\]
and integration over $E$ gives the claimed lower bound.  Evaluating
$\mathscr{L}_su+\Lambda u\ge0$ at $x_*$ gives
\[
\frac{u(x_*)}{m}
\ge
\frac{C_{n,s}C_4}
{C_{n,s}(C_3+C_4)+\Lambda r^{2s}}
=\alpha(n,s,\Lambda r^{2s})>0.
\]

For the distributional case choose $j$ radial and radially
nonincreasing.  Lemma~\ref{mollifi} gives
\[
(\mathscr{L}_s+\Lambda)u^\delta\ge0
\quad\text{in }B_{r-\delta}(x_0),
\]
and antisymmetry gives, for $x\in H$,
\[
u^\delta(x)
=\int_Hu(y)\bigl(j_\delta(x-y)-j_\delta(x-\widetilde y)\bigr)\,\mathrm{d}y\ge0,
\]
since $|x-y|\le|x-\widetilde y|$.  Also
\[
u^\delta\ge m
\quad\text{in }
B_{r-\delta}(x_0)\setminus B_{r/2+\delta}(x_0).
\]
The preceding estimate, with these perturbed radii, is uniform for
$0<\delta<r/8$.  Letting $\delta\downarrow0$ proves the assertion a.e.,
compare \cite[Theorem~2]{LWX2018}.
\end{proof}

\appendix
\section{Auxiliary properties of the mixed operator}

\begin{prop}[Maximum of subsolutions]\label{construct-super}
Let $\Omega\subset\mathbb R^n$ be a domain,
$u,v\in\mathcal L^{2s}(\mathbb R^n)$,
$f,g\in L^1_{\rm loc}(\Omega)$, and $c\in L^\infty(\Omega)$. If
\[
\mathscr{L}_su+cu\le f,
\qquad
\mathscr{L}_sv+cv\le g
\quad\text{in }\mathcal D'(\Omega),
\]
then, for $w=\max\{u,v\}$,
\begin{equation}\label{eq:max-subsolutions}
\mathscr{L}_sw+cw\le
f\chi_{\{u>v\}}+g\chi_{\{u<v\}}
+\max\{f,g\}\chi_{\{u=v\}}
\quad\text{in }\mathcal D'(\Omega).
\end{equation}
\end{prop}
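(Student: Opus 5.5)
We would prove Proposition~\ref{construct-super} by a Kato-type argument: approximate $\max\{u,v\}$ by smooth convex functions of $u-v$, mollify, and pass to the limit. Write $w=v+(u-v)^+$. It therefore suffices to treat $z=u-v$, which satisfies $\mathscr{L}_sz+cz\le f-g$ in $\mathcal D'(\Omega)$. The aim is to show
\[
\mathscr{L}_sz^++cz^+\le (f-g)\chi_{\{z>0\}}+(f-g)^+\chi_{\{z=0\}}
\quad\text{in }\mathcal D'(\Omega).
\]
Adding the inequality for $v$ then gives \eqref{eq:max-subsolutions}, since $g+(f-g)^+=\max\{f,g\}$ on $\{u=v\}$. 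The task thus reduces to a distributional Kato inequality for the mixed operator.

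\textbf{Step 1: mollification.} Fix $\Omega'\Subset\Omega$ and set $z_\delta=z*j_\delta$. On $\Omega'$, $z_\delta$ is smooth and lies in $\mathcal L^{2s}$. Moreover
\[
\mathscr{L}_sz_\delta\le (f-g-cz)*j_\delta=:F_\delta
\]
pointwise in $\Omega'$ for small $\delta$, because $\mathscr{L}_s$ commutes with convolution on $\mathcal L^{2s}$ and a nonnegative distribution mollified with a nonnegative kernel stays nonnegative. As $\delta\to0$ we have $F_\delta\to f-g-cz$ in $L^1_{\rm loc}$ and $z_\delta\to z$ in $L^1_{\rm loc}$ and in the weighted $L^1$ norm.

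\textbf{Step 2: convex approximation.} Take smooth convex $\Phi_k$ with $0\le\Phi_k'\le1$, $\Phi_k(t)\to t^+$, and $\Phi_k'(t)\to\chi_{\{t>0\}}+\tfrac12\chi_{\{t=0\}}$; alternatively, use $\Phi_k'\to\chi_{\{t\ge0\}}$ and keep track of the zero set separately. For smooth functions, convexity gives the pointwise inequalities
\[
-\Delta\Phi_k(z_\delta)=-\Phi_k'(z_\delta)\Delta z_\delta-\Phi_k''(z_\delta)|\nabla z_\delta|^2\le\Phi_k'(z_\delta)(-\Delta z_\delta)
\]
and
\[
(-\Delta)^s\Phi_k(z_\delta)(x)\le\Phi_k'(z_\delta(x))(-\Delta)^sz_\delta(x),
\]
the latter because $\Phi_k(a)-\Phi_k(b)\le\Phi_k'(a)(a-b)$. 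Consequently $\mathscr{L}_s\Phi_k(z_\delta)\le\Phi_k'(z_\delta)F_\delta$, with a right-hand side that is pointwise well defined. Pairing with a nonnegative test function $\varphi$ and moving $\mathscr{L}_s$ onto $\varphi$ produces the weak inequality.

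\textbf{Step 3: limits.} Let $\delta\to0$ first. The left side $\int\Phi_k(z_\delta)\mathscr{L}_s\varphi$ converges by the weighted $L^1$ convergence, since $\Phi_k$ is Lipschitz. The right side $\int\Phi_k'(z_\delta)F_\delta\varphi$ converges after passing to an a.e.\ convergent subsequence, using $L^1$ convergence of $F_\delta$ together with the bound $|\Phi_k'|\le1$ and the continuity of $\Phi_k'$. Then let $k\to\infty$ and use dominated convergence. The zero-order term is handled the same way: it enters through $F_\delta$ and becomes $-c\,z\chi_{\{z>0\}}=-cz^+$.

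The main obstacle is the set $\{z=0\}$. There the limit of $\Phi_k'(z)(f-g)$ depends on the chosen approximation, and the claimed bound $(f-g)^+$ is not obviously produced by it. We would resolve this by running the argument twice, once with $\Phi_k'\to\chi_{\{t>0\}}$ and once with $\Phi_k'\to\chi_{\{t\ge0\}}$, which yield the two bounds $(f-g)\chi_{\{z>0\}}$ and $(f-g)\chi_{\{z\ge0\}}$ respectively. Both are dominated by $(f-g)\chi_{\{z>0\}}+(f-g)^+\chi_{\{z=0\}}$, so we get the bound with $(f-g)^+$ on $\{z=0\}$. This inequality is weaker than the sharp Kato inequality (which would yield $0$ on the zero set) but it is exactly what \eqref{eq:max-subsolutions} asserts, and it is what the barrier argument in Step~1 of Section~\ref{sec:pf-bocher} needs with $f=g=0$.
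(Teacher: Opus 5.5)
\textbf{The reduction to $z=u-v$ fails.} From $\mathscr{L}_su+cu\le f$ and $\mathscr{L}_sv+cv\le g$ you cannot conclude $\mathscr{L}_sz+cz\le f-g$ for $z=u-v$. The hypothesis on $v$ gives only an \emph{upper} bound on $\mathscr{L}_sv+cv$. An upper bound on $\mathscr{L}_s(u-v)$ would need a \emph{lower} bound on it.

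As a result, the Kato inequality for $z^+$ that you aim at is not just unproved but false in general. Take $c=0$, $f=g=0$, $u\equiv0$, and $v=-1-\phi$ with $0\le\phi\in C_c^\infty(\mathbb R^n)$, $\phi\not\equiv0$. At a maximum point $x_0$ of $\phi$ we have $-\Delta\phi(x_0)\ge0$ and $(-\Delta)^s\phi(x_0)>0$, so $\mathscr{L}_s\phi>0$ on a small ball $\Omega$ around $x_0$. Since $\mathscr{L}_s1=0$, both hypotheses hold in $\Omega$. However, $z=1+\phi=z^+$ satisfies $\mathscr{L}_sz^+=\mathscr{L}_s\phi>0=(f-g)\chi_{\{z>0\}}$ in $\Omega$. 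The conclusion \eqref{eq:max-subsolutions} still holds here, since $w\equiv0$. So the error is in separating $v$ from $(u-v)^+$ before the hypothesis on $v$ is used.

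\textbf{The repair is to keep $v$ inside the convex approximation,} which is what the paper does with $M_\varepsilon(a,b)=b+\theta_\varepsilon(a-b)$. Mollify $u$ and $v$ separately and apply your Step 2 computation to $M_k(u_\delta,v_\delta)=v_\delta+\Phi_k(u_\delta-v_\delta)$. The same supporting-line inequality, used for $\Phi_k$ at $z_\delta(x)$, gives pointwise
\[
\mathscr{L}_sM_k(u_\delta,v_\delta)\le\Phi_k'(z_\delta)\,\mathscr{L}_su_\delta+\bigl(1-\Phi_k'(z_\delta)\bigr)\mathscr{L}_sv_\delta .
\]
Only at this point, because both weights lie in $[0,1]$, may you insert the two separate bounds $\mathscr{L}_su_\delta\le(f-cu)*j_\delta$ and $\mathscr{L}_sv_\delta\le(g-cv)*j_\delta$. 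Your limiting arguments (first $\delta\to0$, then $k\to\infty$) then carry over.

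The zero set is not a real obstacle, and the two-run device is unnecessary. For a single approximating family with $\Phi_k'(0)\to\lambda\in[0,1]$, the limiting right-hand side on $\{u=v\}$ is
\[
\lambda(f-cu)+(1-\lambda)(g-cv)\le\max\{f,g\}-cw .
\]
Adding $cw$ gives exactly \eqref{eq:max-subsolutions}.
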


\begin{proof}
Since $|w|\le|u|+|v|$, we have $w\in\mathcal L^{2s}(\mathbb R^n)$. Put $F=f-cu$ and $G=g-cv$, mollify the two inequalities, and let $M_\varepsilon(a,b)=b+\theta_\varepsilon(a-b)$, where $\theta_\varepsilon$ is a smooth convex approximation of $t^+$ and $p_\varepsilon=\theta_\varepsilon'(u^\delta-v^\delta)\in[0,1]$. The local chain rule and the supporting-plane inequality for the fractional term give
\[
\mathscr L_s M_\varepsilon(u^\delta,v^\delta)
\le p_\varepsilon\mathscr L_su^\delta+(1-p_\varepsilon)\mathscr L_sv^\delta
\le p_\varepsilon F^\delta+(1-p_\varepsilon)G^\delta.
\]
Letting $\varepsilon\downarrow0$ and then $\delta\downarrow0$ gives the standard Kato limit (compare \cite[Lemma~2]{LWX2018}): on $\{u>v\}$ and $\{u<v\}$ the weights converge to $1$ and $0$, respectively, while on $\{u=v\}$ every limiting weight lies in $[0,1]$. Hence
\[
\mathscr L_sw\le F\chi_{\{u>v\}}+G\chi_{\{u<v\}}+\max\{F,G\}\chi_{\{u=v\}}.
\]
Adding $cw$ gives \eqref{eq:max-subsolutions}, since $w=u$ on $\{u>v\}$, $w=v$ on $\{u<v\}$, and $u=v=w$ on $\{u=v\}$.
\end{proof}

Let $j\in C_c^\infty(B_1)$ be nonnegative and radial, with
$\int j=1$, and set
\[
j_\delta(x)=\delta^{-n}j(x/\delta),
\qquad
w^\delta=w*j_\delta.
\]

\begin{lemma}[Mollification]\label{mollifi}
Let $w\in\mathcal L^{2s}(\mathbb R^n)$ and
$f\in L^1_{\rm loc}(\Omega)$.  If
\[
\mathscr{L}_sw\le f
\quad\text{in }\mathcal D'(\Omega),
\]
then
\[
\mathscr{L}_sw^\delta\le f^\delta
\quad\text{in }\mathcal D'(\Omega^\delta),
\qquad
\Omega^\delta=\{x\in\mathbb R^n\mid B_\delta(x)\subset\Omega\}.
\]
The same statement holds with $\mathscr{L}_s$ replaced by
$\mathscr{L}_s+c_0$ for any constant $c_0\in\mathbb R$, and the analogous assertions hold with all inequalities reversed.
\end{lemma}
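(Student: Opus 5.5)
The plan is to move the convolution onto the test function. Mollification commutes with $\mathscr{L}_s$ on test functions, and $j_\delta$ is nonnegative, so the inequality survives. Fix a nonnegative $\phi\in C_c^\infty(\Omega^\delta)$. Since $j$ is radial, hence even, the function $\phi^\delta=\phi*j_\delta$ lies in $C_c^\infty(\Omega)$, because $\operatorname{supp}\phi+\overline{B_\delta}\subset\Omega$ by the definition of $\Omega^\delta$. It is also nonnegative. Testing the hypothesis with $\phi^\delta$ gives
\[
\int_{\mathbb R^n} w\,\mathscr{L}_s(\phi^\delta)\,\mathrm{d}x
\le\int_\Omega f\,\phi^\delta\,\mathrm{d}x .
\]

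The key identity is $\mathscr{L}_s(\phi*j_\delta)=(\mathscr{L}_s\phi)*j_\delta$. I would prove it on the Fourier side, since $\mathscr{L}_s$ has the multiplier $|\xi|^2+|\xi|^{2s}$ and $\widehat{\phi*j_\delta}=\widehat\phi\,\widehat{j_\delta}$. Alternatively it follows directly from translation invariance of the principal-value integral. The next step is Fubini, which should give
\[
\int_{\mathbb R^n} w\,\bigl((\mathscr{L}_s\phi)*j_\delta\bigr)\,\mathrm{d}x
=\int_{\mathbb R^n} w^\delta\,\mathscr{L}_s\phi\,\mathrm{d}x,
\qquad
\int_\Omega f\,\phi^\delta\,\mathrm{d}x=\int f^\delta\phi\,\mathrm{d}x .
\]
Here $f^\delta$ is well defined on $\Omega^\delta$ because $B_\delta(x)\subset\Omega$ there.

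This Fubini step is the one place requiring care, namely absolute integrability. We have $|\mathscr{L}_s\phi(y)|\le C(1+|y|)^{-n-2s}$, and $(1+|y|)^{-n-2s}$ is comparable to $(1+|x|)^{-n-2s}$ uniformly for $|x-y|<\delta\le 1$. Therefore
\[
\iint |w(x)|\,j_\delta(x-y)\,|\mathscr{L}_s\phi(y)|\,\mathrm{d}y\,\mathrm{d}x
\le C\int\frac{|w(x)|}{1+|x|^{n+2s}}\,\mathrm{d}x<\infty,
\]
using $w\in\mathcal L^{2s}(\mathbb R^n)$. The same comparability shows $w^\delta\in\mathcal L^{2s}(\mathbb R^n)$, so the left-hand pairing defines $\langle\mathscr{L}_s w^\delta,\phi\rangle$. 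In fact $w^\delta$ is smooth, so this pairing agrees with the pointwise operator. The $f$ term is a compactly supported $L^1_{\rm loc}$ computation. Combining the two displays yields $\langle\mathscr{L}_s w^\delta,\phi\rangle\le\langle f^\delta,\phi\rangle$, which is the claim.

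For $\mathscr{L}_s+c_0$ with a constant $c_0$, the extra term satisfies $c_0\int w\,\phi^\delta=c_0\int w^\delta\phi$ by the same Fubini argument; it commutes with convolution precisely because $c_0$ is constant. The reversed inequalities follow by applying the result to $-w$ and $-f$. I expect no real obstacle; the only delicate point is the tail integrability handled above.
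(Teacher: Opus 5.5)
Your proposal is correct and is essentially the paper's argument. The paper writes $\langle\mathscr{L}_s w^\delta-f^\delta,\varphi\rangle=\int_{B_\delta}j_\delta(z)\langle\mathscr{L}_s w-f,\varphi(\cdot+z)\rangle\,\mathrm{d}z$, which by linearity and the evenness of $j$ is the same as testing the hypothesis with $\varphi*j_\delta$, as you do, and your Fubini/tail justification via $w\in\mathcal L^{2s}(\mathbb R^n)$ and $|\mathscr{L}_s\phi(y)|\le C(1+|y|)^{-n-2s}$ supplies a detail the paper leaves implicit.
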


\begin{proof}
For $0\le\varphi\in C_c^\infty(\Omega^\delta)$,
\begin{align*}
\langle\mathscr{L}_sw^\delta-f^\delta,\varphi\rangle
&=\int_{B_\delta}j_\delta(z)
\langle\mathscr{L}_sw-f,\varphi(\cdot+z)\rangle\,\mathrm{d}z\le0.
\end{align*}
The constant zero-order term commutes with convolution. 
For the purely fractional version, compare \cite[Lemma~1]{LWX2018}.
\end{proof}

\begin{theorem}[Interior smoothness]\label{smooth-harmonic}
Let $\Omega\subset\mathbb R^n$ be a domain and let
$w\in\mathcal L^{2s}(\mathbb R^n)$. If
$\mathscr{L}_sw=0$ in $\mathcal D'(\Omega)$, then $w\in C^\infty(\Omega)$.
\end{theorem}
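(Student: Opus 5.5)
The plan is a parametrix (cutoff) argument that reduces the regularity of \(w\) to the smoothness of the fundamental solution away from the pole. Fix \(x_1\in\Omega\) and \(R>0\) with \(B_{4R}(x_1)\Subset\Omega\). Choose \(\zeta\in C_c^\infty(B_{3R}(x_1))\) with \(\zeta=1\) on \(B_{2R}(x_1)\), and split \(w=\zeta w+(1-\zeta)w\). The first piece, \(w_1=\zeta w\), is a compactly supported distribution (an \(L^1\) function). We cannot conclude \(\mathscr L_s w_1=0\) in \(B_{2R}\), since the fractional part is nonlocal. Instead we compute, for \(\phi\in C_c^\infty(B_{R}(x_1))\),
\[
\langle \mathscr L_s w_1,\phi\rangle
=\langle \mathscr L_s w,\phi\rangle-\int_{\mathbb R^n}(1-\zeta)w\,\mathscr L_s\phi\,\mathrm{d}x
=-\int_{\mathbb R^n}(1-\zeta(y))w(y)\,(-\Delta)^s\phi(y)\,\mathrm{d}y.
\]
The \(-\Delta\phi\) term drops out because \(1-\zeta=0\) on \(\operatorname{supp}\phi\). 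Since \(y\notin B_{2R}\) while \(\operatorname{supp}\phi\subset B_R\), we have
\[
(-\Delta)^s\phi(y)=-C_{n,s}\int\phi(x)|x-y|^{-n-2s}\,\mathrm{d}x .
\]
Hence \(\mathscr L_s w_1=g\) in \(B_R(x_1)\), where
\[
g(x)=C_{n,s}\int_{|y-x_1|\ge 2R}(1-\zeta(y))w(y)|x-y|^{-n-2s}\,\mathrm{d}y .
\]
The kernel is smooth in \(x\in B_R\) with all derivatives bounded by \(C(1+|y|)^{-n-2s}\). Because \(w\in\mathcal L^{2s}\), differentiation under the integral shows \(g\in C^\infty(B_R(x_1))\).

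Next I would show that a compactly supported distribution \(w_1\) with \(\mathscr L_s w_1\in C^\infty\) near \(x_1\) is smooth there. Since \(w_1\in\mathcal E'\), the identity \(w_1=\Gamma_{n,s}*\mathscr L_s w_1\) holds in \(\mathcal S'\). To justify it, take Fourier transforms: \(\widehat{w_1}\) is smooth, and \((|\xi|^2+|\xi|^{2s})m=1\). The product \(\widehat{w_1}\,(|\xi|^2+|\xi|^{2s})\) is a legitimate tempered function, so \(m\cdot\) it \(=\widehat{w_1}\) a.e. This Fourier step should be checked carefully, because \(\mathscr L_s w_1\) is not compactly supported; it decays like \(|x|^{-n-2s}\). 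Now split \(f:=\mathscr L_s w_1=\theta f+(1-\theta)f\), where \(\theta\in C_c^\infty(B_R(x_1))\) equals \(1\) on \(B_{R/2}(x_1)\). Then \(\theta f=\theta g\in C_c^\infty\). By Theorem~\ref{thm:fundamental-solution}, \(\Gamma_{n,s}\in L^1_{\mathrm{loc}}\) and grows at most polynomially at infinity (in fact it decays), so \(\Gamma_{n,s}*(\theta g)\) is smooth.

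For the remainder \((1-\theta)f\), which vanishes on \(B_{R/2}(x_1)\), the convolution \(\Gamma_{n,s}*((1-\theta)f)\) evaluated at \(x\in B_{R/4}(x_1)\) only sees \(\Gamma_{n,s}\) on \(\{|z|\ge R/4\}\). On that set \(\Gamma_{n,s}\) is smooth, with derivatives bounded by the rapid-decay/Riesz-tail bounds from Step 1 of the proof of Theorem~\ref{thm:fundamental-solution} and Lemma~\ref{lem:cutoff-symbol-estimates}: \(|\partial^\beta\Gamma_{n,s}(z)|\le C_\beta|z|^{2s-n-|\beta|}\) for \(|z|\ge R/4\). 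Rigorously, I would write \(\Gamma_{n,s}=\Gamma\tau+\Gamma(1-\tau)\) with a cutoff \(\tau\) near \(0\). The term \((\Gamma\tau)*((1-\theta)f)\) vanishes identically near \(x_1\) by support considerations, once \(\operatorname{supp}\tau\) is small. The term \((\Gamma(1-\tau))*f\) is smooth: \(\Gamma(1-\tau)\) is \(C^\infty\) with derivatives \(O(|z|^{2s-n})\), and \(f\) is a distribution whose far part is \(O(|x|^{-n-2s})\) and whose near part is a derivative-of-\(L^1\) expression of order two. Thus \(w_1\) is smooth near \(x_1\), and the far piece \((1-\zeta)w\) vanishes on \(B_{2R}\), so \(w\in C^\infty\) near \(x_1\).

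The main obstacle is the rigorous bookkeeping of \(f=\mathscr L_s w_1\) as a global tempered distribution: it equals \(-\Delta w_1+(-\Delta)^s w_1\), a compactly supported order-two part plus a part that is merely \(O(|x|^{-n-2s})\) at infinity. For that reason I would prefer an alternative that avoids it. Mollify \(w^\delta=w*j_\delta\), which still satisfies \(\mathscr L_s w^\delta=0\) in \(\Omega^\delta\) (Lemma~\ref{mollifi}, applied with both inequalities). Then apply the representation above to smooth functions, where all pairings are classical, and obtain \(C^k\) bounds on \(B_{R/4}(x_1)\) that are uniform in \(\delta\). These bounds depend only on \(\|w\|_{L^1(B_{4R})}\) and the \(\mathcal L^{2s}\) tail, via the kernel bounds of \(\Gamma_{n,s}\) and \(\nabla\Gamma_{n,s}\) off the origin. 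Passing \(\delta\to0\) via Arzelà--Ascoli then gives \(w\in C^\infty(\Omega)\).
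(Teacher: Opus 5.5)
Your proposal is correct in outline, but it takes a genuinely different route from the paper. Your first step is exactly the paper's localization identity \eqref{eq:frac-localization}: you split off the tail and show that $\mathscr L_s(\zeta w)=g$ near $x_1$, with $g$ smooth by differentiating $|x-y|^{-n-2s}$ under the integral. After that the paper never uses $\Gamma_{n,s}$. It starts from $\chi w\in H^{-N}$ and uses only that $|\xi|^{2s}$ maps $H^t$ into $H^{t-2s}$. Moving the fractional term to the right-hand side gives $-\Delta w\in H^{t-2s}_{\mathrm{loc}}$, and interior regularity of the Laplacian then gains $2-2s>0$ derivatives per step; iteration plus Sobolev embedding gives $C^\infty$.

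You instead use $\Gamma_{n,s}$ as a global parametrix. This draws on the kernel information in Theorem~\ref{thm:fundamental-solution} and Lemma~\ref{lem:cutoff-symbol-estimates}: $\Gamma_{n,s}\in L^1_{\mathrm{loc}}$, it is smooth off the origin, and $|\partial^\beta\Gamma_{n,s}(z)|\le C_\beta|z|^{2s-n-|\beta|}$ for large $|z|$. There is no circularity, since those results do not use Theorem~\ref{smooth-harmonic}. The paper's bootstrap is lighter and self-contained: it needs no kernel asymptotics and works in every dimension. Your route needs $n>2s$ so that $m\in L^1_{\mathrm{loc}}$, which the paper's setting $n\ge2$ guarantees. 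In exchange, your route gives in one step an explicit local representation and $C^k$ bounds in terms of $\|w\|_{L^1(B_{4R}(x_1))}$ and the $\mathcal L^{2s}$ tail.

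The bookkeeping you flag becomes easy if you split the kernel rather than $f$. Take $\tau=1$ near $0$ with small support. Since $\mathscr L_s(\tau\Gamma_{n,s})+\mathscr L_s((1-\tau)\Gamma_{n,s})=\delta_0$ and both $\widehat{w_1}$ and $\widehat{\tau\Gamma_{n,s}}$ are smooth, comparing Fourier transforms gives
\[
w_1=(\tau\Gamma_{n,s})*\mathscr L_s w_1+w_1*\mathscr L_s\bigl((1-\tau)\Gamma_{n,s}\bigr).
\]
The first term is an $\mathcal E'*\mathcal D'$ convolution, which near $x_1$ only sees $\theta g$. The second is a compactly supported $L^1$ function convolved with a smooth function; it is smooth because $(1-\tau)\Gamma_{n,s}$ is smooth with all derivatives in $\mathcal L^{2s}$. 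This makes the mollification detour unnecessary. The detour would not remove the issue anyway. Near $\operatorname{supp}\zeta$, $\mathscr L_s(\zeta w^\delta)$ is not a priori bounded in $L^1$ uniformly in $\delta$. So $\delta$-uniform bounds depending only on $\|w\|_{L^1}$ and the tail need this same transfer of $\mathscr L_s$ onto the cut-off kernel, with all of its derivatives, not just $\Gamma_{n,s}$ and $\nabla\Gamma_{n,s}$.
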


\begin{proof}
Fix smooth subdomains $U\Subset V\Subset\Omega$ and choose $\chi\in C_c^\infty(\Omega)$
with $\chi=1$ in a neighborhood of $\overline V$. Since
$\chi w\in L_c^1(\mathbb R^n)$, one has
$\chi w\in H^{-N}(\mathbb R^n)$ for every $N>n/2$. For $x\in V$, in the
distributional sense,
\begin{equation}\label{eq:frac-localization}
(-\Delta)^sw
=(-\Delta)^s(\chi w)-g_\chi,
\qquad
g_\chi(x)=C_{n,s}\int_{\mathbb R^n}
\frac{(1-\chi(y))w(y)}{|x-y|^{n+2s}}\,\mathrm{d}y.
\end{equation}
For every multi-index $\gamma$ and every $x\in V$,
\[
\partial_x^\gamma g_\chi(x)
=C_{n,s}\int_{\mathbb R^n}(1-\chi(y))w(y)
\partial_x^\gamma|x-y|^{-n-2s}\,\mathrm{d}y.
\]
The distance between $V$ and $\operatorname{supp}(1-\chi)$ is positive,
and the tail assumption on $w$ makes the last integral absolutely
convergent after any number of $x$-derivatives. Hence $g_\chi\in C^\infty(V)$.

Assume now that \(w\in H^t_{\rm loc}(\Omega)\) for some
\(t\in\mathbb R\). Since \(\chi\in C_c^\infty(\Omega)\), we have $\chi w\in H^t(\mathbb R^n)$. The Fourier multiplier representation of \((-\Delta)^s\) gives
\begin{equation}\label{eq:Hs-frac-map}
\|(-\Delta)^s(\chi w)\|_{H^{t-2s}(\mathbb R^n)}
\leq C\|\chi w\|_{H^t(\mathbb R^n)}.
\end{equation}
Since \(g_\chi\in C^\infty(V)\), 
\eqref{eq:Hs-frac-map} and \eqref{eq:frac-localization} imply
$
(-\Delta)^s w\in H^{t-2s}_{\rm loc}(V).
$
Using \(\mathscr L_s w=0\) in \(\mathcal D'(\Omega)\), we obtain
$
-\Delta w=-(-\Delta)^s w
\in H^{t-2s}_{\rm loc}(V).
$
The local elliptic regularity theorem for the Laplacian yields
$
w\in H^{t+2-2s}_{\rm loc}(V).
$
Since \(U\Subset V\Subset\Omega\) are arbitrary, we have proved the
bootstrap implication
\begin{equation}\label{eq:sobolev-bootstrap-mixed}
w\in H^t_{\rm loc}(\Omega)
\quad\Longrightarrow\quad
w\in H^{t+2-2s}_{\rm loc}(\Omega).
\end{equation}

The argument at the beginning of the proof, applied to arbitrary
compactly supported cutoffs, shows that
\[
w\in H^{-N}_{\rm loc}(\Omega)
\qquad\text{for every }N>\frac n2.
\]
Iterating \eqref{eq:sobolev-bootstrap-mixed}, we obtain
\[
w\in H^{-N+k(2-2s)}_{\rm loc}(\Omega)
\qquad\text{for every }k\in\mathbb N.
\]
The Sobolev embedding theorem then gives \(w\in C^m(U)\) for every $m\in\mathbb N$.
Since \(U\Subset\Omega\) are arbitrary, it follows that
\(w\in C^\infty(\Omega)\).
\end{proof}

\section{Homogeneous Distributions}
\label{app:homogeneous-distributions}

\begin{prop}[Fourier transforms of radial homogeneous distributions]
\label{prop:homogeneous-fourier-transform}
Under the normalization \eqref{eq:fourier-normalization}, the following
statements hold.

\begin{enumerate}

\item For \(0<\alpha<n\),
\begin{equation}
    \mathcal F^{-1}(|\xi|^{-\alpha})(x)
    =
    \kappa_{n,\alpha}|x|^{\alpha-n}.
    \label{eq:riesz-kernel-formula}
\end{equation}
The family \(|\xi|^{-\alpha}\), initially defined as a tempered
distribution for \(\Re\alpha<n\), admits a meromorphic continuation
to \(\alpha\in\mathbb C\), with possible poles at
$\alpha=n,n+2,n+4,\cdots$.
The identity \eqref{eq:riesz-kernel-formula} extends accordingly by
meromorphic continuation. Away from the origin, this continuation is
represented by
$\kappa_{n,\alpha}|x|^{\alpha-n}$.
In particular, if \(\alpha=-2k\), \(k\in\mathbb N_0\), then
\[
    \mathcal F^{-1}(|\xi|^{-\alpha})
    =
    (-\Delta)^k\delta_0.
\]

\item Define
\begin{equation*}
\begin{aligned}
    \left\langle\operatorname{Fp}|\xi|^{-n},\varphi\right\rangle
    &=
    \int_{|\xi|<1}
    \frac{\varphi(\xi)-\varphi(0)}{|\xi|^n}\,\mathrm{d}\xi\\
    &\quad+
    \int_{|\xi|>1}
    \frac{\varphi(\xi)}{|\xi|^n}\,\mathrm{d}\xi,
    \qquad
    \varphi\in\mathcal S(\mathbb R^n).
\end{aligned}
\end{equation*}
Then
\begin{equation}
    \mathcal F^{-1}
    \bigl(\operatorname{Fp}|\xi|^{-n}\bigr)
    =
    c_n\log\frac1{|x|}
    +
    c_n\left(
        \log2+\frac12\psi\!\left(\frac n2\right)
        -\frac{\gamma}{2}
    \right)
    \quad\text{in }\mathcal S'(\mathbb R^n),
    \label{eq:finite-part-fourier-transform}
\end{equation}
where
\[
    c_n
    =
    \frac{2^{1-n}}{\pi^{n/2}\Gamma(n/2)},
\]
\(\gamma\) is Euler's constant, and
\(\psi=\Gamma'/\Gamma\).

\end{enumerate}
\end{prop}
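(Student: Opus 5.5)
Part (1) is the classical Riesz computation. The plan is to obtain it through Gaussians and subordination. For \(0<\alpha<n\) one has the identity
\[
|\xi|^{-\alpha}=\frac{1}{\Gamma(\alpha/2)}\int_0^\infty t^{\alpha/2-1}e^{-t|\xi|^2}\,\mathrm{d}t .
\]
Under the normalization \eqref{eq:fourier-normalization}, \(\mathcal F^{-1}(e^{-t|\xi|^2})(x)=(4\pi t)^{-n/2}e^{-|x|^2/(4t)}\). Pairing with a Schwartz test function and using Fubini, which is justified because both integrals converge absolutely when \(0<\alpha<n\), gives
\[
\mathcal F^{-1}(|\xi|^{-\alpha})(x)=\frac{1}{\Gamma(\alpha/2)}\int_0^\infty t^{\alpha/2-1}(4\pi t)^{-n/2}e^{-|x|^2/(4t)}\,\mathrm{d}t .
\]
The substitution \(\tau=|x|^2/(4t)\) turns the right-hand side into \(\kappa_{n,\alpha}|x|^{\alpha-n}\), which proves \eqref{eq:riesz-kernel-formula}.

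For the continuation I would use the Gelfand--Shilov strategy. For \(\varphi\in\mathcal S\) and \(\Re\alpha<n\), write \(\langle|\xi|^{-\alpha},\varphi\rangle\) in polar coordinates. Subtract the Taylor polynomial of the spherical mean of \(\varphi\) on \(|\xi|<1\), keeping only even-order terms since odd terms vanish after spherical averaging. This exhibits \(\alpha\mapsto\langle|\xi|^{-\alpha},\varphi\rangle\) as meromorphic in \(\alpha\), with simple poles only where \(n-\alpha+2j=0\), that is, at \(\alpha=n+2j\). The same continuation applies on the \(x\)-side to \(|x|^{\alpha-n}\). That family has poles at \(\alpha=-2k\), and these are cancelled by the poles of \(\Gamma(\alpha/2)\) in the denominator of \(\kappa_{n,\alpha}\).

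Both sides of \(\langle\mathcal F^{-1}|\xi|^{-\alpha},\varphi\rangle=\kappa_{n,\alpha}\langle|x|^{\alpha-n},\varphi\rangle\) are meromorphic in \(\alpha\) and agree on \(0<\alpha<n\). By uniqueness of continuation they agree wherever both are defined. For \(\varphi\) supported away from \(0\), the right side is just the function \(\kappa_{n,\alpha}|x|^{\alpha-n}\), which gives the statement away from the origin. At \(\alpha=-2k\), the left side is \(\mathcal F^{-1}(|\xi|^{2k})=(-\Delta)^k\delta_0\) by direct computation. On the right side, the residue of \(|x|^{\alpha-n}\) at that point is a multiple of \((-\Delta)^k\delta_0\), and the zero of \(1/\Gamma(\alpha/2)\) combines with it to give the same value. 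Continuity in \(\alpha\) of the continued left side then settles the identity at these points.

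Part (2) I would obtain from (1) by a Laurent expansion at \(\alpha=n\); this is the step I expect to be the main obstacle, because of the constant bookkeeping. On the \(\xi\)-side, the continuation splits as
\[
\langle|\xi|^{-\alpha},\varphi\rangle=\int_{|\xi|<1}\frac{\varphi(\xi)-\varphi(0)}{|\xi|^{\alpha}}\,\mathrm{d}\xi+\int_{|\xi|>1}\frac{\varphi(\xi)}{|\xi|^{\alpha}}\,\mathrm{d}\xi+\frac{\omega_{n-1}\varphi(0)}{n-\alpha}.
\]
This is valid near \(\alpha=n\), and it equals \(\langle\operatorname{Fp}|\xi|^{-n},\varphi\rangle+\omega_{n-1}\varphi(0)/(n-\alpha)+O(n-\alpha)\). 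Taking inverse Fourier transforms, with \(\mathcal F^{-1}(1)=\delta_0\), gives
\[
\mathcal F^{-1}(\operatorname{Fp}|\xi|^{-n})=\lim_{\alpha\to n}\Bigl[\kappa_{n,\alpha}|x|^{\alpha-n}-\frac{\omega_{n-1}}{(2\pi)^n(n-\alpha)}\delta_0\Bigr].
\]
Set \(\epsilon=n-\alpha\). Then \(\kappa_{n,\alpha}=\Gamma(\epsilon/2)/(2^{\alpha}\pi^{n/2}\Gamma(\alpha/2))\), and \(|x|^{-\epsilon}\) is continued near \(\epsilon=0\) in the same way.

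Expand \(\Gamma(\epsilon/2)=2/\epsilon-\gamma+O(\epsilon)\). Expand \(2^{-\alpha}/\Gamma(\alpha/2)\) to first order, which produces \(\log 2\) and \(\tfrac12\psi(n/2)\). Expand \(|x|^{-\epsilon}=1-\epsilon\log|x|+\dots\) after subtracting the \(\delta_0\) pole, which is in fact absent on the \(x\)-side for \(\epsilon\to0\), since \(|x|^{-\epsilon}\) is locally integrable. Multiplying out, the \(1/\epsilon\) terms cancel against the \(\delta_0\) pole only if the coefficients match. What actually happens is that the \(1/\epsilon\) term multiplies the constant distribution \(1\), not \(\delta_0\). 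So I must redo the \(\xi\)-side bookkeeping: pairing with \(\varphi\) on the \(x\)-side, the \(\xi\)-side pole term is \(\omega_{n-1}\widehat\varphi(0)\)-type, i.e. the pole is in the constant direction. Checking that \(c_n=2\cdot2^{-n}\pi^{-n/2}/\Gamma(n/2)\) matches \(\omega_{n-1}/(2\pi)^n\) up to the right factor confirms this. The finite part then yields \(c_n\log(1/|x|)\) together with the constant \(c_n(\log2+\tfrac12\psi(n/2)-\tfrac\gamma2)\), exactly as in \eqref{eq:finite-part-fourier-transform}.
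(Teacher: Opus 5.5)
Your proposal is correct and follows essentially the same route as the paper: Gaussian subordination for $0<\alpha<n$, continuation by Taylor subtraction with the odd moments vanishing, and for (2) the Laurent expansion $|\xi|^{-n+\varepsilon}=\frac{\omega_{n-1}}{\varepsilon}\delta_0+\operatorname{Fp}|\xi|^{-n}+O_{\mathcal S'}(\varepsilon)$, matched against the expansions of $\Gamma(\varepsilon/2)$, $2^{\varepsilon}$, $1/\Gamma(\frac{n-\varepsilon}{2})$ and $|x|^{-\varepsilon}$. The one slip, which you noticed yourself, is the displayed limit: it should subtract the constant function $c_n/(n-\alpha)$, not a multiple of $\delta_0$, since $\mathcal F^{-1}\delta_0=(2\pi)^{-n}$ and $c_n=\omega_{n-1}/(2\pi)^n$ holds exactly rather than ``up to a factor''; note also that the paper takes $\alpha\uparrow n$, so only the original Riesz formula is needed, and it explicitly checks that both remainders are $O(\varepsilon)$ in $\mathcal S'$.
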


\begin{proof}
For \(0<\alpha<n\),
\begin{equation*}
    |\xi|^{-\alpha}
    =
    \frac{1}{\Gamma(\alpha/2)}
    \int_0^\infty
    t^{\alpha/2-1}e^{-t|\xi|^2}\,\mathrm{d}t,
\end{equation*}
and
\begin{equation*}
    \mathcal F^{-1}(e^{-t|\xi|^2})(x)
    =
    (4\pi t)^{-n/2}e^{-|x|^2/(4t)}.
\end{equation*}
Hence, for \(x\neq0\),
\begin{align*}
    \mathcal F^{-1}(|\xi|^{-\alpha})(x)
    &=
    \frac{(4\pi)^{-n/2}}{\Gamma(\alpha/2)}
    \int_0^\infty
    t^{(\alpha-n)/2-1}
    e^{-|x|^2/(4t)}\,\mathrm{d}t\\
    &=
    \frac{
    \Gamma\!\left(\frac{n-\alpha}{2}\right)}
    {2^\alpha\pi^{n/2}\Gamma(\alpha/2)}
    |x|^{\alpha-n}.
\end{align*}
This proves \eqref{eq:riesz-kernel-formula} for \(0<\alpha<n\).

We next construct the meromorphic continuation. Fix \(N\ge0\), and
let \(P_N\varphi\) denote the Taylor polynomial of \(\varphi\) at the
origin of degree \(N\). Initially, for \(\Re\alpha<n\),
\begin{align*}
    \int_{\mathbb R^n}
    |\xi|^{-\alpha}\varphi(\xi)\,\mathrm{d}\xi
    &=
    \int_{|\xi|<1}
    |\xi|^{-\alpha}
    \bigl(\varphi(\xi)-P_N\varphi(\xi)\bigr)\,\mathrm{d}\xi+
    \int_{|\xi|>1}
    |\xi|^{-\alpha}\varphi(\xi)\,\mathrm{d}\xi\\
    &\qquad+
    \sum_{|\rho|\le N}
    \frac{\partial^\rho\varphi(0)}{\rho!}
    \int_{|\xi|<1}
    |\xi|^{-\alpha}\xi^\rho\,\mathrm{d}\xi.
\end{align*}
For
$\Re\alpha<n+|\rho|$,
polar coordinates give
\begin{equation*}
    \int_{|\xi|<1}
    |\xi|^{-\alpha}\xi^\rho\,\mathrm{d}\xi
    =
    \frac{1}{n+|\rho|-\alpha}
    \int_{\mathbb S^{n-1}}
    \theta^\rho\,\mathrm{d}\theta.
\end{equation*}
The first integral above, containing
\(\varphi-P_N\varphi\), is holomorphic for
$\Re\alpha<n+N+1$,
since
\[
    \varphi(\xi)-P_N\varphi(\xi)
    =
    O(|\xi|^{N+1})
    \qquad\text{as }\xi\to0.
\]
The integral over \(|\xi|>1\) is entire in \(\alpha\), because
\(\varphi\) is a Schwartz function. It follows that, for
\(\Re\alpha<n+N+1\), the continuation is given by
\begin{align*}
    \langle T_\alpha,\varphi\rangle
    &=
    \int_{|\xi|<1}
    |\xi|^{-\alpha}
    \bigl(\varphi(\xi)-P_N\varphi(\xi)\bigr)\,\mathrm{d}\xi+
    \int_{|\xi|>1}
    |\xi|^{-\alpha}\varphi(\xi)\,\mathrm{d}\xi\\
    &\qquad+
    \sum_{|\rho|\le N}
    \frac{\partial^\rho\varphi(0)}{\rho!}
    \frac{1}{n+|\rho|-\alpha}
    \int_{\mathbb S^{n-1}}
    \theta^\rho\,\mathrm{d}\theta.
\end{align*}
Thus the only possible poles in this half-plane are
$n,n+1,\ldots,n+N$.
If \(|\rho|\) is odd, then
$\int_{\mathbb S^{n-1}}\theta^\rho\,\mathrm{d}\theta=0$,
so the possible poles reduce to $n,n+2,n+4,\cdots$.
The continuations corresponding to different values of \(N\) agree
on their common domains and hence define an
\(\mathcal S'\)-valued meromorphic family \(T_\alpha\) on
\(\mathbb C\).

For \(0<\alpha<n\), the computation above gives
$\mathcal F^{-1}T_\alpha
    =
    \kappa_{n,\alpha}|x|^{\alpha-n}$.
Since the inverse Fourier transform is continuous on
\(\mathcal S'\), uniqueness of meromorphic continuation extends this
identity to all noncritical values of \(\alpha\). If
\(\alpha=-2k\), then
$T_{-2k}=|\xi|^{2k}$,
and therefore
$\mathcal F^{-1}T_{-2k}
    =
    (-\Delta)^k\delta_0$.

We now consider the first critical value \(\alpha=n\).
For \(\varepsilon>0\) and
\(\varphi\in\mathcal S(\mathbb R^n)\), splitting at \(|\xi|=1\) and
subtracting \(\varphi(0)\) on the unit ball gives
\begin{equation}
    |\xi|^{-n+\varepsilon}
    =
    \frac{\omega_{n-1}}{\varepsilon}\delta_0
    +
    \operatorname{Fp}|\xi|^{-n}
    +
    \varepsilon R_\varepsilon
    \qquad\text{in }\mathcal S'(\mathbb R^n),
    \label{eq:finite-part-laurent-expansion}
\end{equation}
where
\begin{equation}
\begin{aligned}
    \langle R_\varepsilon,\varphi\rangle
    &=
    \int_{|\xi|<1}
    \frac{\varphi(\xi)-\varphi(0)}{|\xi|^n}
    \frac{|\xi|^\varepsilon-1}{\varepsilon}\,\mathrm{d}\xi+
    \int_{|\xi|>1}
    \frac{\varphi(\xi)}{|\xi|^n}
    \frac{|\xi|^\varepsilon-1}{\varepsilon}\,\mathrm{d}\xi.
\end{aligned}
    \label{eq:finite-part-remainder}
\end{equation}
Indeed,
\begin{equation*}
    \int_{|\xi|<1}
    |\xi|^{-n+\varepsilon}\,\mathrm{d}\xi
    =
    \omega_{n-1}
    \int_0^1r^{-1+\varepsilon}\,\mathrm{d}r
    =
    \frac{\omega_{n-1}}{\varepsilon}.
\end{equation*}

Fix \(0<\varepsilon\le\varepsilon_0\). The mean-value theorem gives
\begin{equation*}
    \left|
    \frac{r^\varepsilon-1}{\varepsilon}
    \right|
    \le|\log r|,
    \quad 0<r<1,
\quad
\text{and}\quad
    \left|
    \frac{r^\varepsilon-1}{\varepsilon}
    \right|
    \le r^{\varepsilon_0}\log r,
    \quad r>1.
\end{equation*}
Since
\[
    |\varphi(\xi)-\varphi(0)|
    \le C|\xi|
\]
near the origin, the first integral in
\eqref{eq:finite-part-remainder} is bounded by a constant multiple of
\begin{equation*}
    \int_{|\xi|<1}
    |\xi|^{1-n}|\log|\xi||\,\mathrm{d}\xi<\infty.
\end{equation*}
The second integral is bounded by a fixed Schwartz seminorm of
\(\varphi\). Hence \(R_\varepsilon\) remains bounded in
\(\mathcal S'\).

Taking inverse Fourier transforms in
\eqref{eq:finite-part-laurent-expansion} and using
$\mathcal F^{-1}\delta_0=(2\pi)^{-n}$,
we obtain
\begin{equation}
    \mathcal F^{-1}(|\xi|^{-n+\varepsilon})
    =
    \frac{c_n}{\varepsilon}
    +
    \mathcal F^{-1}
    \bigl(\operatorname{Fp}|\xi|^{-n}\bigr)
    +
    O_{\mathcal S'}(\varepsilon),
    \label{eq:finite-part-inverse-laurent-expansion}
\end{equation}
where
\[
    c_n
    =
    \omega_{n-1}(2\pi)^{-n}
    =
    \frac{2^{1-n}}{\pi^{n/2}\Gamma(n/2)}.
\]

On the other hand, \eqref{eq:riesz-kernel-formula} with
\(\alpha=n-\varepsilon\) gives
\begin{equation*}
    \mathcal F^{-1}(|\xi|^{-n+\varepsilon})(x)
    =
    \frac{\Gamma(\varepsilon/2)}
    {2^{n-\varepsilon}\pi^{n/2}
     \Gamma\!\left(\frac{n-\varepsilon}{2}\right)}
    |x|^{-\varepsilon}.
\end{equation*}
We use
\begin{align*}
    &\Gamma(\varepsilon/2)
    =
    \frac{2}{\varepsilon}-\gamma+O(\varepsilon),
    \qquad\quad
    2^\varepsilon
    =
    1+\varepsilon\log2+O(\varepsilon^2),\\&
    \frac{1}
    {\Gamma\!\left(\frac{n-\varepsilon}{2}\right)}
    =
    \frac{1}{\Gamma(n/2)}
    \left(
        1+\frac{\varepsilon}{2}
        \psi\!\left(\frac n2\right)
        +O(\varepsilon^2)
    \right).
\end{align*}

It remains to justify the expansion of
\(|x|^{-\varepsilon}\) in \(\mathcal S'\).
Choose \(0<\varepsilon_0<n\). Taylor's formula gives, uniformly for
\(0<\varepsilon\le\varepsilon_0\),
\begin{equation*}
\left|
\frac{|x|^{-\varepsilon}
      -1+\varepsilon\log|x|}
     {\varepsilon^2}
\right|
\le
C
\begin{cases}
|x|^{-\varepsilon_0}
\bigl(1+|\log|x||^2\bigr),
&|x|<1,\\[1ex]
1+|\log|x||^2,
&|x|\ge1.
\end{cases}
\end{equation*}
The right-hand side is locally integrable and has at most logarithmic
growth at infinity. Hence
\[
    |x|^{-\varepsilon}
    =
    1-\varepsilon\log|x|
    +O_{\mathcal S'}(\varepsilon^2).
\]
Combining the preceding expansions yields
\begin{align*}
    \mathcal F^{-1}(|\xi|^{-n+\varepsilon})
    &=
    \frac{c_n}{\varepsilon}
    +
    c_n\log\frac1{|x|}+
    c_n\left(
        \log2+\frac12\psi\!\left(\frac n2\right)
        -\frac{\gamma}{2}
    \right)
    +
    O_{\mathcal S'}(\varepsilon).
\end{align*}
Comparison with
\eqref{eq:finite-part-inverse-laurent-expansion}
gives \eqref{eq:finite-part-fourier-transform}.
\end{proof}

\section{Cutoff Multipliers and Dyadic Kernel Estimates}
\label{app:dyadic-estimates}

\begin{lemma}[Cutoff and symbol estimates]
\label{lem:cutoff-symbol-estimates}
\quad
\begin{enumerate}
\item Suppose that \(a\in C^\infty(\mathbb R^n)\) vanishes near the
origin and that, for some \(\mu\in\mathbb R\) and every multi-index \(\rho\),
\begin{equation}
    |\partial_\xi^\rho a(\xi)|
    \leq C_\rho|\xi|^{-\mu-|\rho|}
    \qquad\text{for any }|\xi|\geq1.
    \label{eq:high-frequency-symbol-assumption}
\end{equation}
Then \(K=\mathcal F^{-1}a\) is smooth on
\(\mathbb R^n\setminus\{0\}\), is rapidly decreasing at infinity, and
for every multi-index \(\gamma\) and \(0<|x|\leq1\),
\begin{equation}
    |\partial_x^\gamma K(x)|
    \leq
    \begin{cases}
        C_\gamma|x|^{\mu-n-|\gamma|},&\mu<n+|\gamma|,\\
        C_\gamma(1+|\log|x||),&\mu=n+|\gamma|,\\
        C_\gamma,&\mu>n+|\gamma|.
    \end{cases}
    \label{eq:high-frequency-kernel-bound}
\end{equation}
If, in addition, \(a\) is radial, \(\mu>n\), and
\begin{equation}
    0<\beta<\min\{\mu-n,2\},
    \label{eq:radial-regularity-range}
\end{equation}
then
\begin{equation}
    K(x)=K(0)+o(|x|^\beta),
    \qquad
    \nabla K(x)=o(|x|^{\beta-1}),
    \qquad\text{as }|x|\to0.
    \label{eq:radial-high-frequency-regularity}
\end{equation}

\item Suppose that \(b\) is compactly supported, smooth on
\(\mathbb R^n\setminus\{0\}\), and that, for some \(\nu>-n\) and every multi-index \(\rho\),
\begin{equation*}
    |\partial_\xi^\rho b(\xi)|
    \leq C_\rho|\xi|^{\nu-|\rho|}
    \qquad\text{for any }0<|\xi|\leq2.
\end{equation*}
Then, for every multi-index \(\gamma\) and \(|x|\geq1\),
\begin{equation}
    |\partial_x^\gamma\mathcal F^{-1}b(x)|
    \leq C_\gamma|x|^{-n-\nu-|\gamma|}.
    \label{eq:low-frequency-kernel-bound}
\end{equation}
\end{enumerate}
\end{lemma}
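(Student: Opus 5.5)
The plan is to use a Littlewood--Paley (dyadic) decomposition of the multiplier and sum the kernel bounds of the pieces.

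\textbf{Part (1), kernel bounds.} Fix a radial $\psi\in C_c^\infty(\{1/2<|\xi|<2\})$ with $\sum_{j\ge0}\psi(2^{-j}\xi)=1$ on the support of $a$; this is possible because $a$ vanishes near the origin. Set $a_j(\xi)=a(\xi)\psi(2^{-j}\xi)$ and $K_j=\mathcal F^{-1}a_j$. Rescaling $a_j(2^j\cdot)$ and using \eqref{eq:high-frequency-symbol-assumption}, the rescaled symbol lies in a bounded set of $C_c^\infty(\{1/2<|\xi|<2\})$ after multiplication by $2^{j\mu}$. Hence
\[
|\partial^\gamma K_j(x)|\le C_{\gamma,N}\,2^{j(n+|\gamma|-\mu)}\,(1+2^j|x|)^{-N}
\qquad\text{for every }N.
\]
For $0<|x|\le1$, sum over $j$, splitting at $2^j\approx|x|^{-1}$. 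The terms with $2^j\le|x|^{-1}$ give a geometric sum $\sum 2^{j(n+|\gamma|-\mu)}$, which is $O(|x|^{\mu-n-|\gamma|})$, $O(\log(1/|x|))$, or $O(1)$ according to the three cases. The terms with $2^j>|x|^{-1}$ decay in $N$ and give $O(|x|^{\mu-n-|\gamma|})$. This yields \eqref{eq:high-frequency-kernel-bound}.

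\textbf{Part (1), smoothness and decay away from the origin.} For $|x|\ge1$, write $x^\beta\partial^\gamma K=\mathcal F^{-1}$ of derivatives $\partial_\xi^\beta(\xi^\gamma a)$. Once $|\beta|>n+|\gamma|-\mu$ these are integrable, so $|x|^{M}|\partial^\gamma K(x)|$ is bounded for every $M$. This gives smoothness off the origin and rapid decay. The same dyadic sum applies and is uniform on compacts away from $0$.

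\textbf{Part (1), radial regularity.} When $\mu>n$, $a\in L^1$, so $K$ is continuous and $K(0)=\sum_j K_j(0)$. Since $a$ is radial, each $K_j$ is radial and smooth, so $\nabla K_j(0)=0$ and
\[
|K_j(x)-K_j(0)|\le |x|^2\sup|D^2K_j|\le C\,2^{j(n+2-\mu)}|x|^2 .
\]
Also $|K_j(x)-K_j(0)|\le C\,2^{j(n-\mu)}$. Sum the first bound for $2^j\le|x|^{-1}$ and the second for $2^j>|x|^{-1}$.
\begin{itemize}
\item If $\mu-n<2$, both sums are $O(|x|^{\mu-n})=o(|x|^\beta)$ since $\beta<\mu-n$.
\item If $\mu-n\ge2$, the first sum is $O(|x|^2\log)$ or $O(|x|^2)$, which is $o(|x|^\beta)$ since $\beta<2$.
\end{itemize}
For $\nabla K$, use $|\nabla K_j(x)|\le C\,2^{j(n+2-\mu)}|x|$ for small $2^j|x|$ and $C\,2^{j(n+1-\mu)}$ with decay for large $2^j|x|$. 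This gives $o(|x|^{\beta-1})$ in the same way.

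\textbf{Part (2).} Decompose $b=\sum_{j\le1}b\,\psi(2^{-j}\xi)$. This is valid because $\nu>-n$ makes $b\in L^1$ and the sum converges in $L^1$. Then
\[
|\partial^\gamma K_j(x)|\le C_N\,2^{j(n+\nu+|\gamma|)}(1+2^j|x|)^{-N}.
\]
For $|x|\ge1$, split at $2^j\approx|x|^{-1}$. The low-$j$ part is geometric with positive exponent $n+\nu+|\gamma|>0$ and gives $|x|^{-n-\nu-|\gamma|}$. The high part, with $N$ large, gives the same bound.

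\textbf{Main obstacle.} The delicate step is the uniform rescaled symbol bound: one needs that $2^{j\mu}a_j(2^j\cdot)$, respectively $2^{-j\nu}b_j(2^j\cdot)$, is bounded in $C^k$ uniformly in $j$. This follows from the Leibniz rule together with the symbol estimates. The borderline logarithmic case and the $o$ (rather than $O$) in the radial statement also need care, and both are handled by the strict inequalities in \eqref{eq:radial-regularity-range}.
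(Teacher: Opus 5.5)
Your proof is correct, and its skeleton is the paper's: dyadic pieces rescaled to a fixed annulus, integration by parts giving $|\partial^\gamma K_j(x)|\le C2^{j(n+|\gamma|-\mu)}(1+2^j|x|)^{-N}$, and a split of the sum at $2^j|x|\approx1$. Part (2) is the mirror image; your $j\le1$ with $2^{j}$ is the paper's $j\ge0$ with $2^{-j}$.

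There are two genuine differences.

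\textbf{Rapid decay.} You integrate by parts in $\xi$ on the whole symbol: $x^\beta\partial^\gamma K$ is the inverse transform of the $L^1$ function $\partial_\xi^\beta(\xi^\gamma a)$ once $|\beta|>n+|\gamma|-\mu$. The paper simply sums the dyadic bounds for $|x|\ge1$. Both work.

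\textbf{Radial regularity.} Here the routes differ more substantially.
\begin{enumerate}
\item The paper avoids kernel bounds for $K$ itself. It fixes $\beta<\beta'<\min\{\mu-n,2\}$ and uses $\int|\xi|^{\beta'}|a|<\infty$ together with $|e^{it}-1|\le C|t|^{\beta'}$, or, by evenness, $|e^{it}-1-it|\le C|t|^{\beta'}$. For $\nabla K$ it then needs a case split: $\beta<1$ via \eqref{eq:high-frequency-kernel-bound}, $\beta=1$ via continuity and $\nabla K(0)=0$, and $1<\beta<2$ via the same moment bound.
\item You use a per-piece second-order Taylor expansion with $\nabla K_j(0)=0$, summed dyadically. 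This treats $K$ and $\nabla K$ by one mechanism.
\item Your route gives sharper rates: $O(|x|^{\mu-n})$, $O(|x|^2\log(1/|x|))$ or $O(|x|^2)$ according as $\mu-n<2$, $=2$ or $>2$, with analogous rates for $\nabla K$. The paper's route is more elementary at that step, but it only reaches the little-$o$ through the auxiliary exponent $\beta'$.
\end{enumerate}

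\textbf{A small imprecision.} A fixed $\psi$ supported in $\{1/2<|\xi|<2\}$ gives $\sum_{j\ge0}\psi(2^{-j}\xi)=0$ on $\{|\xi|\le1/2\}$. So this sum cannot equal $1$ on $\operatorname{supp}a$ if $a$ is nonzero there; $a$ is only assumed to vanish near the origin. Likewise $\sum_{j\le1}$ misses $\operatorname{supp}b\cap\{|\xi|\ge4\}$. In both cases you shift the starting or ending index. The finitely many extra pieces lie in $C_c^\infty(\mathbb R^n\setminus\{0\})$ and contribute Schwartz kernels. This is the paper's ``harmless shift'' and its term $b_*$.
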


\begin{proof}
Choose \(\psi\in C_c^\infty(\mathbb R^n\setminus\{0\})\), supported in
\(\{\frac12\leq|\xi|\leq2\}\), such that
\begin{equation*}
    \sum_{j\in\mathbb Z}\psi(2^{-j}\xi)=1
    \qquad\text{for any }\xi\neq0.
\end{equation*}

\smallskip
\emph{High frequencies.}
Because \(a\) vanishes near the origin, after a harmless shift of the
index,
\begin{equation*}
    a=\sum_{j=0}^\infty a_j
    \quad\text{in }\mathcal S',
    \qquad
    a_j(\xi)=\psi(2^{-j}\xi)a(\xi).
\end{equation*}
Each \(a_j\) is supported where \(|\xi|\asymp2^j\). For a multi-index
\(\gamma\),
\begin{equation*}
    K_{j,\gamma}(x)
    =\partial_x^\gamma\mathcal F^{-1}a_j(x)
    =\frac{1}{(2\pi)^n}
      \int e^{ix\cdot\xi}(i\xi)^\gamma a_j(\xi)\,\mathrm{d}\xi.
\end{equation*}
After \(\xi=2^j\zeta\),
\begin{equation*}
    K_{j,\gamma}(x)
    =2^{j(n+|\gamma|-\mu)}
      \frac{1}{(2\pi)^n}
      \int e^{i2^jx\cdot\zeta}A_{j,\gamma}(\zeta)\,\mathrm{d}\zeta,
\end{equation*}
where
\begin{equation*}
    A_{j,\gamma}(\zeta)
    =2^{j\mu}(i\zeta)^\gamma\psi(\zeta)a(2^j\zeta).
\end{equation*}
The amplitudes \(A_{j,\gamma}\) are supported in one fixed annulus and
are uniformly bounded in every \(C^M\)-norm. Indeed, every derivative
of \(a(2^j\zeta)\) contributes a factor \(2^j\), which is canceled by
the decay in \eqref{eq:high-frequency-symbol-assumption}.

For an integer \(L\geq0\),
\begin{equation*}
    (1-\Delta_\zeta)^L e^{i2^jx\cdot\zeta}
    =(1+2^{2j}|x|^2)^L e^{i2^jx\cdot\zeta}.
\end{equation*}
Since the amplitude is compactly supported, integration by parts gives
\begin{equation}
    |K_{j,\gamma}(x)|
    \leq C_{N,\gamma}
    2^{j(n+|\gamma|-\mu)}(1+2^j|x|)^{-N}
    \label{eq:high-frequency-dyadic-estimate}
\end{equation}
for every \(N\geq0\). This is first obtained for even integers \(N=2L\),
and the remaining values follow by weakening the estimate.

For every compact set \(E\Subset\mathbb R^n\setminus\{0\}\), choosing
\(N>n+|\gamma|-\mu\) makes
\(\sum_jK_{j,\gamma}\) uniformly convergent on \(E\). Hence the dyadic
sum is smooth away from the origin. Since the partial sums converge to
\(a\) in \(\mathcal S'\), this smooth function represents
\(\mathcal F^{-1}a\) there.

Let \(0<|x|\leq1\), choose \(J\geq0\) such that
\(2^J|x|\leq1<2^{J+1}|x|\), and put
\(A=n+|\gamma|-\mu\). For \(j\leq J\),
\begin{equation*}
    \sum_{j=0}^J|K_{j,\gamma}(x)|
    \leq C\sum_{j=0}^J2^{jA}
    \leq
    \begin{cases}
        C|x|^{-A},&A>0,\\
        C(1+|\log|x||),&A=0,\\
        C,&A<0.
    \end{cases}
\end{equation*}
For \(j>J\), choose \(N>\max\{A,0\}\). Then
\begin{align*}
    \sum_{j>J}|K_{j,\gamma}(x)|
    &\leq C|x|^{-N}\sum_{j>J}2^{j(A-N)}\\
    &\leq C|x|^{-N}2^{J(A-N)}
     \leq C|x|^{-A}.
\end{align*}
If \(A\leq0\), the last expression is bounded. This proves
\eqref{eq:high-frequency-kernel-bound}.

For \(|x|\geq1\), choose
\(N>M+\max\{A,0\}\) in
\eqref{eq:high-frequency-dyadic-estimate}. Then
\begin{equation*}
    \sum_{j=0}^\infty|K_{j,\gamma}(x)|
    \leq C|x|^{-N}\sum_{j=0}^\infty2^{j(A-N)}
    \leq C_{M,\gamma}|x|^{-M}.
\end{equation*}
Thus \(K\) and all its derivatives are rapidly decreasing at infinity.

Assume now that \(a\) is radial, \(\mu>n\), and
\eqref{eq:radial-regularity-range} holds. Then \(a\in L^1\), and
\begin{equation*}
    K(x)=\frac{1}{(2\pi)^n}\int e^{ix\cdot\xi}a(\xi)\,\mathrm{d}\xi.
\end{equation*}
Choose \(\beta<\beta'<\min\{\mu-n,2\}\). Then \(\int|\xi|^{\beta'}|a(\xi)|\,\mathrm{d}\xi<\infty\). If
\(0<\beta'\leq1\), use \(|e^{it}-1|\leq C|t|^{\beta'}\). If
\(1<\beta'<2\), radiality implies that \(a\) is even and
\(\int\xi a(\xi)\,\mathrm{d}\xi=0\), use
\(|e^{it}-1-it|\leq C|t|^{\beta'}\). In both cases,
\begin{equation*}
    |K(x)-K(0)|
    \leq C|x|^{\beta'}\int|\xi|^{\beta'}|a(\xi)|\,\mathrm{d}\xi
    =o(|x|^\beta).
\end{equation*}

For the gradient, suppose first that \(0<\beta<1\). If
\(\mu<n+1\), \eqref{eq:high-frequency-kernel-bound} gives
\begin{equation*}
    |\nabla K(x)|\leq C|x|^{\mu-n-1}=o(|x|^{\beta-1}),
\end{equation*}
because \(\beta<\mu-n\). If \(\mu=n+1\), the logarithmic bound has
the same conclusion. If \(\mu>n+1\), boundedness suffices. If
\(\beta=1\), then \(\mu>n+1\) and \(\xi a\in L^1\). Hence
\(\nabla K\) is continuous and \(\nabla K(0)=0\) by radiality. If
\(1<\beta<2\), then
\begin{align*}
    |\nabla K(x)-\nabla K(0)|
    &\leq C\int|\xi|\,|e^{ix\cdot\xi}-1|\,|a(\xi)|\,\mathrm{d}\xi\\
    &\leq C|x|^{\beta'-1}\int|\xi|^{\beta'}|a(\xi)|\,\mathrm{d}\xi
     =o(|x|^{\beta-1}).
\end{align*}
This proves \eqref{eq:radial-high-frequency-regularity}.

\smallskip
\emph{Low frequencies.}
Since \(\nu>-n\), the function \(b\) is locally integrable near the
origin. Decompose
\begin{equation*}
    b=b_*+\sum_{j=0}^\infty b_j,
    \qquad
    b_j(\xi)=\psi(2^j\xi)b(\xi),
\end{equation*}
where \(b_*\in C_c^\infty(\mathbb R^n\setminus\{0\})\) collects the
finitely many annuli away from the origin. The inverse Fourier transform
of \(b_*\) is Schwartz. The support of \(b_j\) lies where
\(|\xi|\asymp2^{-j}\). Rescaling \(\xi=2^{-j}\zeta\) and repeating
the previous integration-by-parts argument gives
\begin{equation*}
    |\partial_x^\gamma\mathcal F^{-1}b_j(x)|
    \leq C_{N,\gamma}
    2^{-j(n+\nu+|\gamma|)}(1+2^{-j}|x|)^{-N}.
\end{equation*}
Let \(|x|\geq1\), choose \(J\geq0\) such that
\(2^J\leq|x|<2^{J+1}\), and set \(B=n+\nu+|\gamma|>0\). For
\(j\geq J\),
\begin{equation*}
    \sum_{j\geq J}|\partial_x^\gamma\mathcal F^{-1}b_j(x)|
    \leq C\sum_{j\geq J}2^{-jB}
    \leq C|x|^{-B}.
\end{equation*}
For \(j<J\), choose \(N>B\). Then
\begin{align*}
    \sum_{j<J}|\partial_x^\gamma\mathcal F^{-1}b_j(x)|
    &\leq C|x|^{-N}\sum_{j<J}2^{j(N-B)}\\
    &\leq C|x|^{-N}2^{J(N-B)}
     \leq C|x|^{-B}.
\end{align*}
Adding the Schwartz contribution from \(b_*\) proves
\eqref{eq:low-frequency-kernel-bound}.
\end{proof}

\begin{corollary}[Effect of a frequency cutoff]
\label{cor:cutoff-homogeneous-terms}
Let \(\chi\in C_c^\infty(\mathbb R^n)\) be radial and equal to \(1\)
near the origin, and set \(\eta=1-\chi\).

\begin{enumerate}

\item If \(\alpha>0\) and
\(\alpha\notin\{n,n+2,n+4,\ldots\}\), then
\begin{equation}
    \mathcal F^{-1}(\eta|\xi|^{-\alpha})(x)
    =\kappa_{n,\alpha}|x|^{\alpha-n}+H_\alpha(x),
    \label{eq:high-frequency-cutoff-homogeneous}
\end{equation}
where \(H_\alpha\) is smooth and radial near the origin. At
\(\alpha=n\),
\begin{equation}
    \mathcal F^{-1}
    \bigl(\eta\operatorname{Fp}|\xi|^{-n}\bigr)(x)
    =c_n\log\frac1{|x|}+H_n(x),
    \label{eq:high-frequency-cutoff-critical}
\end{equation}
where \(H_n\) is smooth and radial near the origin.

\item If \(\alpha<n\) and
\(\alpha\notin\{0,-2,-4,\ldots\}\), then, for every \(N>0\),
\begin{equation}
    \mathcal F^{-1}(\chi|\xi|^{-\alpha})(x)
    =\kappa_{n,\alpha}|x|^{\alpha-n}
    +O(|x|^{-N}),
    \qquad |x|\to\infty.
    \label{eq:low-frequency-cutoff-homogeneous}
\end{equation}
If \(\alpha\in\{0,-2,-4,\ldots\}\), then
\(\mathcal F^{-1}(\chi|\xi|^{-\alpha})\) is a Schwartz function.

\end{enumerate}
\end{corollary}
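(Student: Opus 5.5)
The idea is to write the cutoff homogeneous multiplier as the full homogeneous distribution minus its complementary cutoff piece. Then we invoke Proposition~\ref{prop:homogeneous-fourier-transform} for the former and Lemma~\ref{lem:cutoff-symbol-estimates} for the latter. The complementary piece is always harmless in the relevant region: smooth near the origin, or rapidly decreasing at infinity.

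\emph{Part (1).} Take \(\alpha>0\) with \(\alpha\notin\{n,n+2,\dots\}\). Let \(T_\alpha\) denote the meromorphically continued distribution \(|\xi|^{-\alpha}\). Since \(\eta\) vanishes near the origin, \(\eta T_\alpha=\eta|\xi|^{-\alpha}\) as distributions: the continuation only modifies the pairing at \(\xi=0\), and \(\chi\) localizes exactly there. Hence
\[
\mathcal F^{-1}(\eta|\xi|^{-\alpha})=\mathcal F^{-1}T_\alpha-\mathcal F^{-1}(\chi T_\alpha).
\]
By Proposition~\ref{prop:homogeneous-fourier-transform}, \(\mathcal F^{-1}T_\alpha\) equals \(\kappa_{n,\alpha}|x|^{\alpha-n}\) away from the origin. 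The term \(\chi T_\alpha\) is a compactly supported distribution, so its inverse Fourier transform is an entire (hence smooth) function. It is radial because \(\chi\) and \(T_\alpha\) are rotation invariant. So \(H_\alpha=-\mathcal F^{-1}(\chi T_\alpha)\), which is smooth on all of \(\mathbb R^n\).

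Two points need care. First, the identity must hold as functions near the origin, not just on \(\mathbb R^n\setminus\{0\}\). For \(\alpha<n\), both sides are \(L^1_{\rm loc}\) and agree as distributions, so this is immediate. For \(\alpha>n\), the continued \(\mathcal F^{-1}T_\alpha\) might carry a distribution supported at \(0\). The statement only concerns the function representative on a punctured neighborhood, which is smooth by Lemma~\ref{lem:cutoff-symbol-estimates}(1), so this is harmless. Second, the critical case \(\alpha=n\) is handled identically with \(\operatorname{Fp}|\xi|^{-n}\) in place of \(T_\alpha\). We have \(\eta\operatorname{Fp}|\xi|^{-n}=\eta|\xi|^{-n}\), because the subtraction of \(\varphi(0)\) is invisible when \(\varphi\) vanishes near \(0\). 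Then \eqref{eq:finite-part-fourier-transform} gives \(c_n\log(1/|x|)\) plus a constant, which we absorb into \(H_n\) together with the entire function \(-\mathcal F^{-1}(\chi\operatorname{Fp}|\xi|^{-n})\).

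\emph{Part (2).} Take \(\alpha<n\) with \(\alpha\notin\{0,-2,\dots\}\). Now \(|\xi|^{-\alpha}\) is locally integrable, and
\[
\mathcal F^{-1}(\chi|\xi|^{-\alpha})=\kappa_{n,\alpha}|x|^{\alpha-n}-\mathcal F^{-1}(\eta|\xi|^{-\alpha}).
\]
For \(0<\alpha<n\), the first term comes from \eqref{eq:riesz-kernel-formula}. For \(\alpha\le0\), we use the continuation: \(T_\alpha\) coincides with the locally integrable function \(|\xi|^{-\alpha}\), since no pole is crossed for \(\Re\alpha<n\). By Proposition~\ref{prop:homogeneous-fourier-transform}, its inverse Fourier transform is \(\kappa_{n,\alpha}|x|^{\alpha-n}\) away from the origin. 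The multiplier \(\eta|\xi|^{-\alpha}\) vanishes near \(0\) and satisfies the symbol bounds \eqref{eq:high-frequency-symbol-assumption} with \(\mu=\alpha\). Lemma~\ref{lem:cutoff-symbol-estimates}(1) then says it is smooth away from \(0\) and rapidly decreasing, which yields the \(O(|x|^{-N})\) error.

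Finally, if \(\alpha=-2k\), then \(\chi|\xi|^{2k}\) is a smooth, compactly supported polynomial multiple of \(\chi\), so its inverse Fourier transform is Schwartz.

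The main obstacle is the bookkeeping in part (1) for \(\alpha>n\): ensuring that \(\eta T_\alpha=\eta|\xi|^{-\alpha}\) and that any point-supported contributions of the continuation do not affect the pointwise representative. I would settle this by testing against \(\varphi\) supported away from the origin of the relevant variable.
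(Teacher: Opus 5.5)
Your proposal is correct and follows the paper's proof essentially step for step. You split $T_\alpha=\eta T_\alpha+\chi T_\alpha$ and use Proposition~\ref{prop:homogeneous-fourier-transform} (with $\operatorname{Fp}|\xi|^{-n}$ at $\alpha=n$) together with the smoothness of $\mathcal F^{-1}(\chi T_\alpha)$ for part (1), then apply Lemma~\ref{lem:cutoff-symbol-estimates} to get rapid decay of $\mathcal F^{-1}(\eta|\xi|^{-\alpha})$ for part (2). Your explicit checks that $\eta T_\alpha=\eta|\xi|^{-\alpha}$ and $\eta\operatorname{Fp}|\xi|^{-n}=\eta|\xi|^{-n}$ are details the paper leaves implicit, and your worry about a point-supported part when $\alpha>n$ is harmless; it is in fact moot, since $|x|^{\alpha-n}\in L^1_{\rm loc}$ for $\alpha>0$ and the continued identity therefore holds in all of $\mathcal S'$.
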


\begin{proof}
Let \(T_\alpha\) denote the homogeneous continuation of
\(|\xi|^{-\alpha}\) given by
Proposition~\ref{prop:homogeneous-fourier-transform}. For the first
assertion, since
\(\eta T_\alpha=T_\alpha-\chi T_\alpha\) and \(\chi T_\alpha\) is a
compactly supported distribution, its inverse Fourier transform is
smooth. Proposition~\ref{prop:homogeneous-fourier-transform} therefore
gives \eqref{eq:high-frequency-cutoff-homogeneous}. The smooth remainder
is radial because both \(T_\alpha\) and \(\chi\) are radial.

At \(\alpha=n\), the same argument applied to
\(\operatorname{Fp}|\xi|^{-n}\) gives
\eqref{eq:high-frequency-cutoff-critical}.

For the second assertion, assume that \(\alpha<n\) and
\(\alpha\notin\{0,-2,-4,\ldots\}\). In this range \(T_\alpha\) agrees
with the locally integrable function \(|\xi|^{-\alpha}\). Since
\(\chi T_\alpha=T_\alpha-\eta T_\alpha\),
Proposition~\ref{prop:homogeneous-fourier-transform} gives
\(\mathcal F^{-1}T_\alpha(x)=\kappa_{n,\alpha}|x|^{\alpha-n}\) for
\(x\ne0\). Moreover, \(\eta T_\alpha=\eta|\xi|^{-\alpha}\) is smooth and satisfies
the high-frequency symbol estimates in
Lemma~\ref{lem:cutoff-symbol-estimates}, with \(\mu=\alpha\). Hence
\(\mathcal F^{-1}(\eta T_\alpha)\) is rapidly decreasing at infinity,
which proves \eqref{eq:low-frequency-cutoff-homogeneous}.

Finally, if \(\alpha=-2k\), \(k\in\mathbb N_0\), then
\(\chi|\xi|^{-\alpha}=\chi|\xi|^{2k}\in C_c^\infty(\mathbb R^n)\),
so its inverse Fourier transform is Schwartz.
\end{proof}

{\bf{Acknowledgments.}} 
C.~Li and J.~Xie are partially supported by the National Natural Science Foundation of China (Grant No. W2531006, 12250710674 and 12031012) and the Institute of Modern Analysis-A Frontier Research Center of Shanghai. L.~Wang was funded by the European Union through the European Research
Council (ERC), under the Starting Grant ``ANGEVA'' (grant agreement
No.~101076411).  Views and opinions expressed are, however, those of the
author only and do not necessarily reflect those of the European Union or the European Research Council.  Neither the European Union nor the granting
authority can be held responsible for them. 

\medskip
{\bf AI disclosure statement.}
During the preparation of this manuscript, the authors used ChatGPT-5.6 Sol Plus as an interactive generative-AI tool to assist with parts of
the asymptotic analysis and the organization of mathematical arguments,
as well as with editing and presentation. All mathematical results and
proofs were reviewed and verified by the authors, who take full
responsibility for the content of the manuscript.

\medskip
{\bf Data availability statement.} No data were used for the research described in the article.

\medskip
{\bf Conflict of interest statement.} On behalf of all authors, the corresponding author states that there is no conflict of interest.


\begin{thebibliography}{99}

\bibitem{ABR1992}
S. Axler, P. Bourdon, and W. Ramey,
\emph{B\^ocher's theorem},
Amer. Math. Monthly \textbf{99} (1992), 51--55.


\bibitem{AJS2018}
N. Abatangelo, S. Jarohs, and A. Salda\~na,
\emph{Green function and Martin kernel for higher-order fractional Laplacians in balls},
Nonlinear Anal. \textbf{175} (2018), 173--190.


\bibitem{Bocher1903}
M. B\^ocher,
\emph{Singular points of functions which satisfy partial differential equations of the elliptic type},
Bull. Amer. Math. Soc. \textbf{9} (1903), 455--465.



\bibitem{BL1981}
H. Brezis and P.-L. Lions,
\emph{A note on isolated singularities for linear elliptic equations},
in \emph{Mathematical Analysis and Applications, Part A}, L. Nachbin, ed.,
Adv. in Math. Suppl. Stud., vol. 7A, Academic Press, New York, 1981,
263--266.


\bibitem{BI2008}
G. Barles and C. Imbert,
\emph{Second-order elliptic integro-differential equations. Viscosity solutions' theory revisited},
Ann. Inst. H. Poincar\'e C Anal. Non Lin\'eaire \textbf{25} (2008), 567--585.

\bibitem{BDQ2025}
B. Barrios, L.~M. Del Pezzo, and A. Quaas,
\emph{Mixed local and nonlocal Laplacian without standard critical exponent for Lane--Emden equation},
arXiv preprint 2507.12258, 2025.

\bibitem{BDVV2022}
S. Biagi, S. Dipierro, E. Valdinoci, and E. Vecchi,
\emph{Mixed local and nonlocal elliptic operators: regularity and maximum principles},
Comm. Partial Differential Equations \textbf{47} (2022), 585--629.

\bibitem{BDVV2023}
S. Biagi, S. Dipierro, E. Valdinoci, and E. Vecchi,
\emph{A Faber--Krahn inequality for mixed local and nonlocal operators},
J. Anal. Math. \textbf{150} (2023), 405--448.

\bibitem{BMV2024}
S. Biagi, D. Mugnai, and E. Vecchi,
\emph{A Brezis--Oswald approach for mixed local and nonlocal operators},
Commun. Contemp. Math. \textbf{26} (2024), Article 2250057.


\bibitem{BVDV2021}
S. Biagi, E. Vecchi, S. Dipierro, and E. Valdinoci,
\emph{Semilinear elliptic equations involving mixed local and nonlocal operators},
Proc. Roy. Soc. Edinburgh Sect. A \textbf{151} (2021), 1611--1641.

\bibitem{BJK2010}
I.~H. Biswas, E.~R. Jakobsen, and K.~H. Karlsen,
\emph{Viscosity solutions for a system of integro-PDEs and connections to optimal switching and control of jump-diffusion processes},
Appl. Math. Optim. \textbf{62} (2010), 47--80.

\bibitem{BK2023}
A. Biswas and S. Khan,
\emph{Existence-uniqueness for nonlinear integro-differential equations with drift in $\mathbb R^d$},
SIAM J. Math. Anal. \textbf{55} (2023), 4378--4409.

\bibitem{BG1960}
R.~M. Blumenthal and R.~K. Getoor,
\emph{Some theorems on stable processes},
Trans. Amer. Math. Soc. \textbf{95} (1960), 263--273.

\bibitem{Bdd2022}
S. Buccheri, J.~V. da Silva, and L.~H. de Miranda,
\emph{A system of local/nonlocal $p$-Laplacians: the eigenvalue problem and its asymptotic limit as $p\to\infty$},
Asymptot. Anal. \textbf{128} (2022), 149--181.

\bibitem{CKS2011}
Z.-Q. Chen, P. Kim, and R. Song,
\emph{Heat kernel estimates for $\Delta+\Delta^{\alpha/2}$ in $C^{1,1}$ open sets},
J. Lond. Math. Soc. (2) \textbf{84} (2011), 58--80.

\bibitem{CKSV2010}
Z.-Q. Chen, P. Kim, R. Song, and Z. Vondra\v cek,
\emph{Sharp Green function estimates for $\Delta+\Delta^{\alpha/2}$ in $C^{1,1}$ open sets and their applications},
Illinois J. Math. \textbf{54} (2010), 981--1024.

\bibitem{CLL2017}
W. Chen, C. Li, and Y. Li,
\emph{A direct method of moving planes for the fractional Laplacian},
Adv. Math. \textbf{308} (2017), 404--437.

\bibitem{CLM2020}
W. Chen, Y. Li, and P. Ma,
\emph{The Fractional Laplacian},
World Scientific, Hackensack, NJ, 2020.

\bibitem{DM2024}
C. De Filippis and G. Mingione,
\emph{Gradient regularity in mixed local and nonlocal problems},
Math. Ann. \textbf{388} (2024), 261--328.

\bibitem{DQ2026}
L.~M. Del Pezzo and A. Quaas,
\emph{The fundamental solution of the fractional $p$-Laplacian},
NoDEA Nonlinear Differential Equations Appl. \textbf{33} (2026), Article 62.


\bibitem{DPV2022}
S. Dipierro, E. Proietti Lippi, and E. Valdinoci,
\emph{Linear theory for a mixed operator with Neumann conditions},
Asymptot. Anal. \textbf{128} (2022), 571--594.

\bibitem{DPV2023}
S. Dipierro, E. Proietti Lippi, and E. Valdinoci,
\emph{(Non)local logistic equations with Neumann conditions},
Ann. Inst. H. Poincar\'e C Anal. Non Lin\'eaire \textbf{40} (2023), 1093--1166.

\bibitem{DSVZ2025}
S. Dipierro, X. Su, E. Valdinoci, and J. Zhang,
\emph{Qualitative properties of positive solutions of a mixed order nonlinear Schr\"odinger equation},
Discrete Contin. Dyn. Syst. \textbf{45} (2025), 1948--2000.

\bibitem{DV2021}
S. Dipierro and E. Valdinoci,
\emph{Description of an ecological niche for a mixed local/nonlocal dispersal. An evolution equation and a new Neumann condition arising from the superposition of Brownian and L\'evy processes},
Phys. A \textbf{575} (2021), Article 126052.

\bibitem{DRZ2021}
R. Dumitrescu, C. Reisinger, and Y. Zhang,
\emph{Approximation schemes for mixed optimal stopping and control problems with nonlinear expectations and jumps},
Appl. Math. Optim. \textbf{83} (2021), 1387--1429.

\bibitem{FQ2011}
P. Felmer and A. Quaas,
\emph{Fundamental solutions and Liouville type theorems for nonlinear integral operators},
Adv. Math. \textbf{226} (2011), 2712--2738.

\bibitem{FR2024}
X. Fern\'andez-Real and X. Ros-Oton,
\emph{Integro-Differential Elliptic Equations},
Progress in Mathematics, vol. 350, Birkh\"auser, Cham, 2024.

\bibitem{GK2022}
P. Garain and J. Kinnunen,
\emph{On the regularity theory for mixed local and nonlocal quasilinear elliptic equations},
Trans. Amer. Math. Soc. \textbf{375} (2022), 5393--5423.

\bibitem{GL2023}
P. Garain and E. Lindgren,
\emph{Higher H\"older regularity for mixed local and nonlocal degenerate elliptic equations},
Calc. Var. Partial Differential Equations \textbf{62} (2023), Article 67.


\bibitem{GilbargSerrin1955}
D. Gilbarg and J. Serrin,
\emph{On isolated singularities of solutions of second order elliptic differential equations},
J. Analyse Math. \textbf{4} (1955/56), 309--340.


\bibitem{Gidas1980}
B. Gidas,
\emph{Symmetry properties and isolated singularities of positive solutions of nonlinear elliptic equations},
in Nonlinear Partial Differential Equations in Engineering and Applied Science, Lecture Notes in Pure and Appl. Math., vol. 54, Dekker, New York, 1980, 255--273.

\bibitem{GS1981}
B. Gidas and J. Spruck,
\emph{Global and local behavior of positive solutions of nonlinear elliptic equations},
Comm. Pure Appl. Math. \textbf{34} (1981), 525--598.

\bibitem{GL1984}
F. Gimbert and P.-L. Lions,
\emph{Existence and regularity results for solutions of second-order elliptic integro-differential operators},
Ricerche Mat. \textbf{33} (1984), 315--358.

\bibitem{GLX2026}
Y. Guo, C. Li, and J. Xie,
\emph{Liouville theorems for the Lane--Emden equation involving a mixed local--nonlocal operator},
arXiv preprint 2606.16940, 2026.

\bibitem{HS2026}
H. Hajaiej and Y. Su,
\emph{The best constant in the G-N inequality for the mixed local and nonlocal Laplacian},
arXiv preprint 2604.05177, 2026.




\bibitem{HW1954}
P. Hartman and A. Wintner,
\emph{On the local behavior of solutions of non-parabolic partial differential equations. II. The uniqueness of the Green singularity},
Amer. J. Math. \textbf{76} (1954), 351--361.


\bibitem{JK2006}
E.~R. Jakobsen and K.~H. Karlsen,
\emph{A maximum principle for semicontinuous functions applicable to integro-partial differential equations},
NoDEA Nonlinear Differential Equations Appl. \textbf{13} (2006), 137--165.

\bibitem{K2026}
T. Klimsiak,
\emph{B\^ocher type theorem for elliptic equations with drift-perturbed L\'evy operators},
J. Math. Pures Appl. \textbf{213} (2026), Article 103939.

\bibitem{LLWX2020}
C. Li, C. Liu, Z. Wu, and H. Xu,
\emph{Non-negative solutions to fractional Laplace equations with isolated singularity},
Adv. Math. \textbf{373} (2020), Article 107329.

\bibitem{LWX2018}
C. Li, Z. Wu, and H. Xu,
\emph{Maximum principles and B\^ocher type theorems},
Proc. Natl. Acad. Sci. USA \textbf{115} (2018), 6976--6979.


\bibitem{RaoSongVondracek2006}
M.~Rao, R.~Song, and Z.~Vondra\v{c}ek,
\emph{Green function estimates and Harnack inequality for subordinate
Brownian motions}, Potential Anal. \textbf{25} (2006), 1--27.

\bibitem{S2007}
L. Silvestre,
\emph{Regularity of the obstacle problem for a fractional power of the Laplace operator},
Comm. Pure Appl. Math. \textbf{60} (2007), 67--112.

\bibitem{SV2007}
R. Song and Z. Vondra\v cek,
\emph{Parabolic Harnack inequality for the mixture of Brownian motion and stable process},
Tohoku Math. J. (2) \textbf{59} (2007), 1--19.

\bibitem{SVWZ2022}
X. Su, E. Valdinoci, Y. Wei, and J. Zhang,
\emph{Regularity results for solutions of mixed local and nonlocal elliptic equations},
Math. Z. \textbf{302} (2022), 1855--1878.

\bibitem{SVWZ2025}
X. Su, E. Valdinoci, Y. Wei, and J. Zhang,
\emph{On some regularity properties of mixed local and nonlocal elliptic equations},
J. Differential Equations \textbf{416} (2025), 576--613.

\bibitem{SX2025}
X. Su and S. Xu,
\emph{Qualitative properties of positive solutions to mixed local and nonlocal critical problems in $\mathbb R^n$},
arXiv preprint 2512.21873, 2025.

\end{thebibliography}
\end{document}